\theoremstyle{plain}
\newtheorem{theorem}{Theorem}[section]
\newtheorem{lemma}[theorem]{Lemma}
\newtheorem{definition}[theorem]{Definition}
\theoremstyle{definition}
\newtheorem{remark}[theorem]{Remark}
\theoremstyle{remark}
\numberwithin{equation}{section}
\newcommand{\sym}{\mathrm{sym}}
\newcommand{\Tl}{\mathbb{T}}
\newcommand{\T}{\mathcal{T}}
\newcommand{\Ss}{\mathbb{S}}
\newcommand{\ep}{\varepsilon}
\newcommand{\e}{\varepsilon}
\newcommand{\ffi}{\varphi}
\newcommand{\Span}{\mathrm{span}\,}
\newcommand{\C}{\mathbb{C}}
\newcommand{\R}{\mathbb{R}}
\newcommand{\N}{\mathbb{N}}
\newcommand{\Z}{\mathbb{Z}}
\newcommand{\E}{\mathcal{E}}
\newcommand{\F}{\mathcal{F}}
\newcommand{\AS}{\mathcal{AS}}
\newcommand{\di}{\textrm{dist}}
\newcommand{\far}{\mathrm{far}}
\newcommand{\insieme}{{\Omega_{\T_\ep}}}
\newcommand{\ud}{\mathrm{d}}
\newcommand{\Om}{\Omega}
\newcommand{\supp}{\mathrm{supp}\,}
\newcommand{\M}{\mathcal{M}}
\newcommand{\Huno}{\mathcal H^1}
\newcommand{\weakly}{\rightharpoonup}           
\newcommand{\weakstar}{\stackrel{*}{\weakly}}   
\newcommand{\Cu}{\mathrm{Curl}\;}
\newcommand{\SO}{\mathrm{SO}(2)}
\newcommand{\conv}{\mathrm{conv}}
\newcommand{\mdi}{X_\ep^{\gamma,\mathrm{discr}}(\Omega)}
\newcommand{\sdi}{\mathcal{S}_\ep^{\mathrm{discr}}}
\newcommand{\asdi}{\mathcal{AS}_{\ep}^{\gamma,\mathrm{discr}}}
\newcommand{\cut}{\mathsf{C}}
\newcommand{\betalin}{\beta^{\mathrm{cf}}}
\def\XXint#1#2#3{{\setbox0=\hbox{$#1{#2#3}{\int}$}
     \vcenter{\hbox{$#2#3$}}\kern-.5\wd0}}
\newcommand{\res}{\mathop{\hbox{\vrule height 7pt width .5pt depth 0pt
\vrule height .5pt width 6pt depth 0pt}}\nolimits}
\newcommand{\newatop}{\genfrac{}{}{0pt}{1}} 
\def\@splitop#1#2\@nil{$\mathscr{#1}\!\!$\calligra#2\,\,}
\newcommand*\DeclareCursiveOperator[2]{%

 \newcommand#1{\mathop{\mbox{\@splitop#2\@nil}}\nolimits}}
\DeclareCursiveOperator{\Anew}{A}
\DeclareCursiveOperator{\Bnew}{B}
\DeclareCursiveOperator{\Cnew}{C}
\DeclareCursiveOperator{\Dnew}{D}
\DeclareCursiveOperator{\Enew}{E}
\DeclareCursiveOperator{\Qnew}{Q}
\title[Nonlinear energy for edge dislocations] {$\Gamma$-convergence analysis of the nonlinear self-energy induced by edge dislocations in semi-discrete and discrete models in two dimensions}
\author[R. Alicandro]
{R. Alicandro}
\address[Roberto Alicandro]{DIEI, Universit\`a di Cassino e del Lazio meridionale, via Di Biasio 43, 03043 Cassino (FR), Italy}
\email[R. Alicandro]{alicandr@unicas.it}
\author[L. De Luca]
{L. De Luca}
\address[Lucia De Luca (Corresponding author)]{Istituto per le Applicazioni del Calcolo ``M. Picone'', IAC-CNR, via dei Taurini 19, 00185 Roma, Italy}
\email[L. De Luca]{lucia.deluca@cnr.it}
\author[M. Palombaro]
{M. Palombaro}
\address[Mariapia Palombaro]{DISIM, Universit\`a dell'Aquila, Via Vetoio, 67100 L'Aquila, Italy}
\email[M. Palombaro]{mariapia.palombaro@univaq.it}
\author[M. Ponsiglione]
{M. Ponsiglione}
\address[Marcello Ponsiglione]{Dip. di Matematica, Univ. Roma-I ``La Sapienza'', Piazzale Aldo Moro 5, 00185 Roma, Italy}
\email[M. Ponsiglione]{ponsigli@mat.uniroma1.it}
\begin{document}

\begin{abstract}

We propose nonlinear semi-discrete and discrete models for the elastic energy induced by a finite system of edge dislocations in two dimensions.
Within the {\it dilute regime}, we analyze the asymptotic behavior of the nonlinear elastic energy, as the {\it core-radius} (in the semi-discrete model) and the lattice spacing (in the purely discrete one) vanish. 
Our analysis passes through a linearization procedure  within the rigorous framework of $\Gamma$-convergence.

\par\medskip
{\noindent \textbf{Keywords:}
Dislocations, Nonlinear elasticity, Plasticity,
Discrete-to-continuum limits, 
Gamma-convergence
}
\par
{\noindent \textbf{MSC2020:}
74C05,  
58K45, 
70G75, 
74G65, 
49J45 
}
\end{abstract}
\maketitle

\tableofcontents
\section*{Introduction}
We introduce  some  variational nonlinear models for the elastic energy induced by a finite system of edge dislocations in two dimensions, proposing and analyzing  a purely discrete model settled in the regular triangular lattice,  together with (and relying on)  a variant of the  core radius approach in \cite{SZ}.

We start by describing our variational discrete model in absence of defects. 
We assume to have a finite portion of the regular triangular lattice, with lattice spacing $\ep$ (eventually vanishing), occupying a reference configuration $\Omega\subset\R^2$\,. In this framework, deformations are described through 
discrete strain fields $\beta$ which are defined on the nearest neighbor bonds of the lattice. In the defect-free case, these strains are in fact discrete gradients of some implicitly defined (by integration) deformation function defined on the nodes.
The underlying compatibility condition is that the discrete circulation of $\beta$ around each triangular cell is trivial.
In the present formulation 
the discrete energy is thus a function of the deformation strain $\beta$  and  is given by the sum of two contributions: a two-body interaction potential accounting for the elongation of the bonds, and a three-body interaction term penalizing changes of area of the triangular cells. In the thermodynamic limit (as the lattice spacing vanishes), this model gives back  nowadays classical continuous nonlinear models, where the energy density behaves like the squared distance of the strain from the set of rotations; in the small strain limit, one gets, by linearization, the classical isotropic (continuous) linearized  elasticity (we refer to \cite{BSV,Schm09, ALP} for discrete-to-continuum and linearization results in the context of short and long-range interaction energies). 
Remarkably,  by tuning the  pre-factors  in front of the two contributions of the nonlinear discrete energy, the 
simultaneous discrete-to-continuum limit  and linearization process yield in the limit functional all possible Lam\'e coefficients (see formula \eqref{tuttilame}). 

In this framework, edge dislocations can be introduced as topological singularities of the discrete strain field $\beta$\,, namely, on each cell $\ep T$\,, we enforce the circulation of $\beta$ to belong to (a rotation of) the $\ep$-spaced triangular lattice. 
If non-zero, such a vector represents the so-called {\it Burgers vector} $\ep\xi$\,, which detects and quantifies the presence of an edge dislocation in the triangle $\ep T$\,. We thus identify such a dislocation with the weighted Dirac mass $\ep\xi\delta_{x_{\ep T}}$\,, where $x_{\ep T}$ denotes the barycenter of the triangle $\ep T$\,. 
 A finite distribution of edge dislocations, centered at the points $x^n$ and having Burgers vectors $\ep\xi^n\in\R^2$\,, can be then identified 
with the empirical measure  $\mu=\ep\sum_{n}\xi^n\delta_{x^n}$. 
For any given configuration of dislocations $\mu$, the class of admissible strains is given by those discrete strain fields whose discrete circulation around each triangle $\ep T$ is given by $\mu(\ep T)$.
The presence of edge dislocations enforces some distortion on any admissible strain: since  pure rotations become incompatible,  a resulting stored elastic energy is induced. 
For a single dislocation, such an energy is of order $\ep^2 |\log\ep|$. 
Our analysis is performed in a dilute regime of edge dislocations, where the total stored energy is bounded by $C\ep^2|\log\ep|$\,, which is consistent with the presence of a finite number of dislocations. 
This energy regime has been widely analyzed in terms of $\Gamma$-convergence, 
both for continuous models, also referred to as {\it semi-discrete} models, (see \cite{GLP,DGP} for the linear case and \cite{SZ} for a nonlinear one) and for 
discrete linear energies \cite{ADLPP}. (For analogous results in the linear scalar framework of screw dislocations and of vortices in superconductivity we refer the interested reader to \cite{SS2,P,AC,ACP,ADGP,DL}).
 \par
As the lattice spacing vanishes, the discrepancy between discrete and continuous models should also vanish;  similarly, since we are dealing with a vanishing energy regime of order $\ep^2 |\log\ep|$, also the energy gap between nonlinear and linearized models should vanish.  The purpose of this paper is to rigorously prove these facts, by proposing a purely discrete nonlinear model (the first to our knowledge) and
 building a bridge from discrete to continuous  and from  nonlinear to linear edge dislocation models.
For this purpose, we first  introduce and analyze  a slight variant of the (nonlinear) continuous model studied in \cite{SZ}, which will be instrumental  to the discrete problem. 
 Then we introduce our purely discrete nonlinear model,  inspired from the linear discrete models in \cite{ADLPP, GT,AO}, and we show that it behaves like the continuous one,  as $\ep \to 0$. 
 \par
The nonlinear continuous model we consider is based on the so-called {\it core-radius approach}:
Given a configuration of dislocations $\mu=\ep\sum_{n}\xi^n\delta_{x^n}$, we drill an $\ep$-disc around each dislocation $x^n$\,, and set $\Omega_\ep(\mu):= \Omega \setminus \bigcup_{n} \overline{B}_{\ep}(x^n)$\,.
Here, $\ep$ is (proportional to) the lattice spacing and the discs $B_\ep(x^n)$ represent the so-called {\it cores} of the dislocations $x^n$\,, where plastic effects take place. Removing the cores means to neglect such plastic effects, whose energy contribution is expected to be a lower order perturbation with respect to the elastic energy computed outside the cores, namely, in $\Omega_\ep(\mu)$\,. 
 The admissible strains, in analogy with the discrete setting, are those fields $\beta$ defined on $\Omega_\ep(\mu)$ whose circulation around each $B_\ep(x^n)$ is equal to $\ep\xi^n$. The corresponding elastic energy of $\beta$ (and, in turn, induced by $\mu$) behaves like the integral on $\Omega_\ep(\mu)$ of the squared distance of $\beta$ from the set of rotations $\SO$\,.
  \par 
 Now we describe the further kinematic condition that we impose on the admissible strains. To this end,
assume for the time being that the distribution of dislocations is given by a single dislocation $\ep \xi^0\delta_{x^0}$\,, with  $x^0\in\Omega$ and $\xi^0$ belonging to the $R^0$-rotation of the regular triangular lattice. In such a case, the rigidity estimate \cite{FJM} together with the energetic bound, guarantees that $\beta$ is close to a constant rotation $R\in\SO$ plus an $\ep$-multiple of a linear strain $\beta^{\mathrm{lin}}$ satisfying the circulation condition $\int_{\partial B_\ep(x^0)}\beta^{\mathrm{lin}}t\,\ud\Huno=\xi^0$\,.
In general, the rotation $R$ around which the linearization takes place and the rotation $R^0$ associated to the Burgers vector are decoupled.
However, if $R\neq R^0$ the Burgers vector is in general not consistent with the underlying rigidly rotated lattice, providing then a physically unacceptable configuration.
This suggests to incorporate in the model some extra compatibility condition between the Burgers vectors and the admissible strains in order to prevent such unphysical configurations.
In this respect which options are best suited 
 is in fact still questionable, and a canonical choice seems to be missing in the present literature. 
The main reason is that, in contrast with purely linear models, here we are trying to combine the linear circulation condition, which is built upon the additive decomposition of the linear strain, 
together with the nonlinear energetic framework. 
In this respect, it would be preferable to adopt a multiplicative decomposition of the strain in a plastic, and an elastic part \cite{RC, RSC}, but at the present it is not clear to us how to implement it in a purely discrete framework.   
Moreover, while in the limit as $\ep\to 0$ we obtain a fixed rotation, for positive $\ep$ nonlinear deformations make unavailable a clear notion of local orientation for the deformed lattice. 
In order to overcome this lack of rigidity, in the present formulation we impose that, in the annulus of radii $\ep$ and $\ep^\gamma$ (with $\gamma\in(0,1)$) around each dislocation point, the average of the admissible strain fields coincides with the rotation associated to the corresponding Burgers vector. 
For positive $\ep$\,, rigidity arguments show that such a consistency between the orientation of the Burgers vector and the underlying lattice holds, not only in average, but almost pointwise.
In fact, such an average condition guarantees, in the compactness result, that the limit constant rotation $R$ of  the admissible fields $\beta_\ep$ is also the limit of each of the rotations associated with each Burgers vector.
Such a condition leads us to 
 work under the {\it well separation} assumption, namely, to restrict the class of admissible empirical measures to that for which the distance of two dislocations is larger than (a multiple of) $\ep^\gamma$\,. 
 \par
Within the framework described above, we prove that a given sequence $\{\mu_\ep\}_\ep$ of admissible configurations of edge dislocations, satisfying the energy bound $C\ep^2|\log\ep|$\,, has (once scaled by $\ep$) uniformly bounded mass; therefore,  up to subsequences, $\frac{\mu_\ep}{\ep}$
 converges (in the weak star topology) to a finite sum $\mu=\sum_{k}\xi^k\delta_{x^k}$ of $\R^2$-weighted Dirac deltas.  Furthermore, a sequence of admissible strains $\{\beta_\ep\}_\ep$ compatible with $\{\mu_\ep\}_\ep$\,, converges, up to subsequences, to a constant rotation $R$ that is compatible with $\mu$\,, i.e., the weights $\xi^k$ lie in the $R$-rotated regular triangular lattice. Eventually, we show that $\beta_\ep\sim R+\ep\sqrt{|\log\ep|}\betalin$\,, for some far-field linear strain $\betalin$\,, which is curl-free. This is the content of Theorem \ref{teo:comp}. The corresponding $\Gamma$-convergence result, Theorem \ref{teo:Gamma-conv}, shows that the $\Gamma$-limit of the energy functional with respect to the convergence described above, is given by the sum of a self-energy plus the linearized (around $R$) elastic energy of the limit field $\betalin$\,. The self-energy for such a nonlinear model takes the form of $\sum_{k}\ffi(R^{\mathrm{T}}\xi^k)$\,, where $\ffi$ is a positively $1$-homogeneous function that is obtained by a cell-formula through a relaxation procedure accounting for the underlying lattice structure.
We highlight that such a density $\ffi$ depends only on the corresponding linearized elasticity tensor and on the lattice structure and hence it coincides with  the one computed in linear models \cite{GLP}.
 Loosely speaking, our result shows that the nonlinear elastic energy outside of the cores in the energy regime $\ep^2|\log\ep|$ can be linearized thus obtaining the same $\Gamma$-limit as in the corresponding linear model.
  \par
 Some comments are in order.
 The proof of our result follows the lines of that in \cite{SZ}, where the authors focus on a finite system of fixed (i.e., independent of $\ep$) edge dislocations, whose Burgers vectors lie in the unrotated lattice.
As mentioned above,  our varying measures satisfy a uniform bound on the total variation; therefore, 
from the point of view of compactness properties, our case does not differ much from the case of a fixed system of singularities.
Nevertheless, the situation slightly changes when looking at the $\Gamma$-limit of the elastic energy. Indeed, whereas for a fixed measure no relaxation takes place in the computation of the self-energy, in our model two or more different singularities for positive $\ep$ may converge to the same limiting singularity, thus lowering the value of the self-energy. This means that our $\Gamma$-limit could be  smaller than that computed in \cite{SZ}.
Furthermore,  we stress that the average condition on the strain fields described above is not present in \cite{SZ}, where a possible discrepancy
between the (limit) rotation around which the linearization is performed and the rotation  (that is actually the identity matrix) of the lattice where the Burgers vectors lie, may occur. 
 However, as explained in \cite{MSZlibro} (see also \cite{CGM}), such a gap is not unphysical if one interprets $\Omega$ as the deformed configuration instead of the reference one. Having in mind such an interpretation, an admissible strain field $\beta$ can be locally understood as  an inverse deformation gradient, so that  its  circulation turns out to be a vector of the reference lattice (without any rotation). 
 In fact, while it is quite accepted that the Burgers circuit should be drawn in the deformed configuration, the correct notion of Burgers vector is rather questionable. The one corresponding to the circulation of the (local) inverse of the deformation gradient on a closed path in the deformed configuration is usually referred to as the {\it true} Burgers vector \cite[formula (1-5)]{HL} and, as already explained, it has the advantage to have a topological nature, being quantized on the reference lattice, without being effected by any sort of small elastic fluctuations.
 On the other hand, the so-called {\it local} Burgers vector, defined as the circulation of the deformation gradient on a closed path in the reference configuration, besides being very popular, has a direct constitutive relation with the deformation field. 
 Our proposal of notion of Burgers vector represents a kind of compromise between the two, being defined on the reference configuration but belonging to a rotation of the lattice.
In fact, such a procedure is borrowed from 
 linear models \cite{AO, CL, GLP, DGP}, where the  
 (local) Burgers vector 
 is (somehow tacitly \cite[page 21]{HL}) assumed to belong to the (unrotated) reference lattice. 
Adopting the notion of true Burgers vector within a purely discrete model seems to be a challenging task; in this respect, it would be interesting (also in the linear framework) to 
introduce discrete models defined directly on the deformed configuration, combining the discrete formalism of \cite{LM} with the analysis in \cite{CGM}.
 \par
Summarizing, our model is meant to be a first attempt to combine sound and efficient techniques available in the linear framework together with fundamental discrete nonlinear models. 
Specifically, it relies on the presence of a reference configuration, it is implicitly based on an additive decomposition of the strain, although the energetic setting is nonlinear. 
The resulting modeling choice has a flavor of compromise: we have shown that canonical arguments in linear elasticity theories of dislocations can be pushed to and exploited in a nonlinear framework as well, at the price of some extra care needed 
 to relieve the conflict generated by such an unusual combination. 
 
 Our contribution   represents a first attempt to rigorously analyze in terms of $\Gamma$-convergence a purely discrete nonlinear model for edge dislocations in two dimensions. 

\vskip10pt
{\bf Acknowledgments:} R. Alicandro, L.  De Luca, and M. Palombaro are members of the
Gruppo Nazionale per l'Analisi Matematica, la Probabilit\`a e le loro Applicazioni (GNAMPA) of the Istituto Nazionale di Alta Matematica (INdAM).
The authors thank G. Lazzaroni for interesting and fruitful discussions on  discrete nonlinear elasticity models.

\vskip10pt
{\bf Notation:} Let $X$ be an Euclidean space. For every $\rho>0$ and for every $x\in X$\,, $B_\rho(x)$ denotes the open ball centered at $x$ with radius equal to $\rho$ (with respect to the Euclidean metric). Furthermore, for every $x\in X$ and for every $0<r<R$\,, the symbol $A_{r,R}(x)$ denotes the (open) annulus $B_R(x)\setminus\overline{B}_r(x)$\,. In what follows, $\{e_1,e_2\}$ denotes the canonical basis of $\R^2$\,, i.e., $e_1=(1\quad0)^{\mathrm{T}}$ and $e_2=(0\quad 1)^\mathrm{T}$\,.
 Moreover, $\SO$ is the set of rotations of $\R^2$\,, i.e., the set of matrices $R\in\R^{2\times 2}$ such that $R^{\mathrm{T}} R=\mathrm{Id}$ and $\det R=1$\,. Finally, throughout the paper the symbol $C$ denotes a positive constant that may change from line to line. Whenever we want to stress the dependence of $C$ on some parameters $p_1,\ldots,p_J$ (with $J\in\N$) we write $C_{p_1,\ldots,p_J}$\,.
\section{The semi-discrete model}\label{sec:semidi}
In this section we introduce and analyze the nonlinear semi-discrete model for the elastic energy induced by a family of edge dislocations.
\subsection{Description of the problem}
Here we introduce the main notations that will be used throughout the paper.

Let $\Omega\subset\R^2$ be a bounded, open, and simply connected set with Lipschitz continuous boundary.   
Let $S\subset\R^2$ be a finite subset of $\R^2$ and set $\Ss:=\mathrm{span}_\Z S$\,. Here $\Ss$ represents a reference underlying lattice.
We set $\Pi(\Ss):=\bigcup_{R\in \SO} R\Ss$ and $\mathcal I (\Ss):=\{ R\in \SO: R \Ss = \Ss\}$\,. Let $0<\gamma<1$\,.
Denoting by $\M(\R^2;\R^2)$ the class of $\R^2$-valued Radon measures on $\R^2$\,, for every $\ep>0$ (which is a parameter tuning the lattice spacing) we define the class of admissible edge dislocation measures $X_{\ep}^{\gamma}(\Omega)$ as
\begin{equation}\label{mcont}
\begin{aligned}
X^\gamma_{\ep}(\Omega):=&\Big\{\mu\in\M(\R^2;\R^2)\,:\, \mu=\ep\sum_{n=1}^N\xi^n\delta_{x^n}\,,\,N\in\N,\, x^n\in\Omega,\,\xi^n\in \Pi(\Ss)\,,\\
&\phantom{\mu\in\M(\R^2;\R^2)\,:\,} 
\quad |x^{n_1}-x^{n_2}| \ge 4\ep^\gamma \textrm{ whenever }n_1\neq n_2, \\
&\phantom{\mu\in\M(\R^2;\R^2)\,:\,} \quad  \di (x^n,\partial \Om) \ge 2 \e^\gamma  \textrm{ for all } n=1,\ldots, N  \Big\}\,.
\end{aligned}
\end{equation}
Moreover, for every $R\in\SO$\,, we set
\begin{equation*}
\overline X^R(\Omega):=\Big\{\mu\in\M(\R^2;\R^2)\,:\,\mu=\sum_{k=1}^K\xi^k\delta_{x^k}\,,\,K\in\N,\,x^k\in\Omega,\,\xi^k\in R\Ss\Big\}\,.
\end{equation*}
For every $0<\ep<1$ and for every $\mu=\ep\sum_{n=1}^N  R^n b^n \delta_{x^n}\in X_{\ep}^{\gamma} (\Omega)$, with $b^n\in \Ss$ and $R^n\in\SO$, we set $\Omega_\ep(\mu):=\Omega\setminus\bigcup_{n=1}^N\overline{B}_\ep(x^n)$ and  
we define the class of {\it admissible strains} associated to $\mu$ as
\begin{equation}\label{adm}
\begin{aligned}
\AS^{\gamma}_\ep(\mu):=\Big\{&\beta\in L^2(\Omega;\R^{2\times 2})\,:\,\beta\equiv 0\textrm{ in }\bigcup_{x\in\supp\mu}B_\ep(x)\,,\\
&\phantom{\beta\in L^2(\Omega;\R^{2\times 2})\,:\,} \Cu\beta=0\textrm{ in }\Omega_\ep(\mu), \quad \fint_{A_{\ep,\ep^{\gamma}}(x^n)} \beta \, \ud x \in  R^n\mathcal I(\mathbb S),   \\
&\phantom{\beta\in L^2(\Omega;\R^{2\times 2})\,:\,}
\int_{\partial B_\ep(x^n)}\beta  t\,\ud\Huno= \ep R^n b^n\, \textrm{for every } 
n=1,\ldots,N 
\Big\}\,,
\end{aligned}
\end{equation}
where, here and below, the operator $\Cu\!$ acts row-wise and is understood in the distributional sense, $t$ denotes the tangent vector field to $\partial B_\ep(x^n)$ and the integrand $\beta t$ is intended in
the sense of traces.
Notice that the weight of any admissible singularity cannot be uniquely decomposed into a product $R b$ with $R\in \SO$ and $b\in \Ss$ (as an example, if $\Ss=\Z^2$ then $e_1$ can be written also as the clockwise $\frac\pi 2$-rotation of $e_2$); this is why the condition on the average of $\beta$  in \eqref{adm} is written in terms of an inclusion (into $R^n\mathcal I(\mathbb S)$) rather than an identity.  

We now introduce the nonlinear energy density.
Let $W\in C^0(\R^{2\times 2}; [0,+\infty))$ satisfy the following assumptions:
\begin{align}\label{iprop}\tag{{i}}
&\textrm{$W(\mathrm{Id})=0$\,;}\\ \label{iiprop}\tag{{ii}}
&\textrm{$W(RM)=W(M)$ for every $M\in\R^{2\times 2}, \, R\in\SO$\,; }\\  \label{iiiprop}\tag{{iii}}
&\textrm{$W(MR)=W(M)$ for every $M\in\R^{2\times 2}, \, R\in \mathcal I (\mathbb S)$\,;}\\ \label{ivprop}\tag{{iv}}
&\textrm{there exists a constant $c>0$ such that:
$W(M)\ge c\,\di^2(M,\SO)$ for every $M\in\R^{2\times 2}$}\,;\\ \label{vprop}\tag{{v}}
&\textrm{$W\in C^2$ in a neighborhood of $\SO$}.
\end{align}
Condition \eqref{iprop} states that the reference configuration is stress-free and condition \eqref{iiprop} is the so-called {\it frame indifference}. 
Condition \eqref{iiiprop} says that the energy density $W$ is invariant with respect to the symmetry group of rotations of the underlying lattice (while, if assumed for all rotations, would be an isotropy condition).
Condition \eqref{ivprop} is a coercivity assumption that guarantees compactness properties whereas condition \eqref{vprop} serves to linearize the energy around the equilibria.
 We highlight that, combining conditions \eqref{iprop}, \eqref{iiprop} and \eqref{ivprop}, the well of $W$ is $\SO$\,.
 
In view of property \eqref{vprop}, we can define the $2\times 2\times 2\times 2$ linearized elasticity tensor associated to $W$ as
\begin{equation}\label{tlin}
\C:=\frac{\partial^2 W}{\partial M^2}(\mathrm{Id})\,.
\end{equation}
For every $0<\ep<1$ we define the energy functional $\F^{\gamma}_\ep:\M(\R^2;\R^2)\times L^2(\Omega;\R^{2\times 2})\to [0,+\infty]$ as
\begin{equation}\label{enercont}
\F^{\gamma}_\ep(\mu,\beta):=\left\{\begin{array}{ll}
\int_{\Omega_\ep(\mu)}W(\beta)\,\ud x&\textrm{if }\mu\in X^\gamma_\ep(\Omega)\textrm{ and }\beta\in\AS^{\gamma}_\ep(\mu)\\
+\infty&\textrm{otherwise.}
\end{array}
\right.
\end{equation}
\subsection{Compactness}
Here we state and prove the compactness result for the semi-discrete energy in \eqref{enercont}.
\begin{theorem}\label{teo:comp}
Let $\{(\mu_\ep;\beta_\ep)\}_\ep\subset \M(\R^2;\R^2)\times L^2(\Omega;\R^{2\times 2})$ be such that 
\begin{equation}\label{enbound}
\sup_{\ep>0}\F^{\gamma}_\ep(\mu_\ep,\beta_\ep)\le C\ep^2|\log\ep|\,,
\end{equation}
for some constant $C>0$\,.
Then, up to a subsequence, there exist $\{R_\ep\}_\ep\subset\SO$\,, a rotation $R\in\SO$\,, a measure $\mu\in \overline X^R(\Omega)$\,, and a field $\betalin\in L^2(\Omega;\R^{2\times 2})$ with $\mathrm{Curl}\,\betalin=0$\,,
 such that:
 \begin{align}\label{zeroprop}\tag{{0}}
&\textrm{$R_\ep\to R$\,;}\\ 
\label{dueprop} \tag{{1}}
&\textrm{$\frac{\mu_\ep}{\ep}\weakstar \mu$\,;}\\
\label{unoprop}\tag{{2}}
&\textrm{$\frac{\beta_\ep-R_\ep}{\ep\sqrt{|\log\ep|}}\weakly \betalin$ in $L^2(\Omega;\R^{2\times 2})$\,.} 
\end{align}
\end{theorem}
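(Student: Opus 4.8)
The plan is to combine a logarithmic lower bound for the self-energy of each singularity (to bound the masses and obtain the measure compactness), a geometric-rigidity estimate of Friesecke--James--Müller type \cite{FJM} (to produce the rotations $R_\ep$ and the $L^2$-bound on $\beta_\ep-R_\ep$), and finally the averaging constraint in \eqref{adm} (to identify the limit rotation $R$ and the admissibility $\mu\in\overline X^R(\Omega)$), following the scheme of \cite{SZ}. \textbf{Step 1 (masses).} By the well-separation and boundary-distance conditions in \eqref{mcont}, the annuli $A_{\ep,\ep^\gamma}(x^n)$ are pairwise disjoint and contained in $\Omega_\ep(\mu_\ep)$. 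Writing $\xi^n=R^nb^n$ with $b^n\in\Ss$, the circulation condition in \eqref{adm} forces a nontrivial distortion on each annulus: localizing, applying rigidity on $A_{\ep,\ep^\gamma}(x^n)$, and passing to the linearized circulation estimate $\int_{A_{r,R}}|\eta|^2\ge c|\xi|^2\log(R/r)$ for fields $\eta$ with $\Cu\eta=\xi\delta_0$, together with the coercivity \eqref{ivprop}, yields
\[
\F^\gamma_\ep(\mu_\ep,\beta_\ep)\ \ge\ c\sum_{n=1}^{N_\ep}|b^n|^2\,\ep^2\,(1-\gamma)|\log\ep|.
\]
Comparing with \eqref{enbound} gives $\sum_n|b^n|^2\le C$; since every nonzero $b^n\in\Ss$ has length bounded below, the number of singularities $N_\ep$ and the total mass $|\mu_\ep|(\R^2)/\ep=\sum_n|b^n|$ are equibounded. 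Weak-$*$ compactness of measures with equibounded mass and support in $\overline\Omega$ then gives \eqref{dueprop} with $\mu=\sum_k\xi^k\delta_{x^k}$, along a subsequence.

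\textbf{Step 2 (rotations --- the crux).} The aim is a single $R_\ep\in\SO$ with $\|\beta_\ep-R_\ep\|_{L^2(\Omega_\ep(\mu_\ep))}^2\le C\ep^2|\log\ep|$. Since $\beta_\ep$ is only $\Cu$-free in the perforated, multiply connected domain $\Omega_\ep(\mu_\ep)$, I would first neutralize the periods: build an explicit corrector $g_\ep$ carrying the circulations $\ep R^nb^n$ around the cores, with $\|g_\ep\|_{L^2}^2\le C\ep^2|\log\ep|$, so that $\beta_\ep-g_\ep$ is curl-free with vanishing periods, hence a gradient $\nabla y_\ep$ on $\Omega_\ep(\mu_\ep)$. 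Applying the rigidity estimate (robust with respect to the small $\ep$-perforations, as in \cite{FJM,SZ}) to $\nabla y_\ep$ produces $R_\ep$ with $\int|\nabla y_\ep-R_\ep|^2\le C\int\di^2(\nabla y_\ep,\SO)$, and the triangle inequality together with \eqref{ivprop} (so that $\int_{\Omega_\ep(\mu_\ep)}\di^2(\beta_\ep,\SO)\le \tfrac1c\,\F^\gamma_\ep\le C\ep^2|\log\ep|$) closes the bound. Compactness of $\SO$ then yields $R_\ep\to R$ up to a subsequence, proving \eqref{zeroprop}. I expect this step to be the main obstacle: controlling the rigidity constant uniformly in $\ep$ across the shrinking holes, and keeping the corrector's energy at the sharp order $\ep^2|\log\ep|$, is the technical heart of the argument.

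\textbf{Step 3 (strain compactness).} Dividing the estimate of Step 2 by $\ep^2|\log\ep|$ shows that $\frac{\beta_\ep-R_\ep}{\ep\sqrt{|\log\ep|}}$ is bounded in $L^2(\Omega;\R^{2\times 2})$, hence, up to a further subsequence, converges weakly to some $\betalin$, which is \eqref{unoprop}. For $\Cu\betalin=0$ I would note that, extending $\beta_\ep$ by $0$ on the cores, the distributional $\Cu\beta_\ep$ is supported on $\bigcup_n\partial B_\ep(x^n)$ with total variation controlled by $\sum_n\ep|b^n|\le C\ep$; therefore $\Cu\frac{\beta_\ep-R_\ep}{\ep\sqrt{|\log\ep|}}$ has mass $\le C/\sqrt{|\log\ep|}\to0$, and weak continuity of $\Cu$ from $L^2$ into distributions forces $\Cu\betalin=0$.

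\textbf{Step 4 (identification of $R$ and admissibility of $\mu$).} It remains to show $\mu\in\overline X^R(\Omega)$, i.e.\ $\xi^k\in R\Ss$, which is where the averaging constraint enters. For each $n$, the mean $Q^n_\ep:=\fint_{A_{\ep,\ep^\gamma}(x^n)}\beta_\ep\,\ud x\in R^n\mathcal I(\Ss)$ is itself a rotation, and since $|A_{\ep,\ep^\gamma}(x^n)|\sim\ep^{2\gamma}$,
\[
\Big|Q^n_\ep-R_\ep\Big|\le |A_{\ep,\ep^\gamma}(x^n)|^{-1/2}\,\|\beta_\ep-R_\ep\|_{L^2(A_{\ep,\ep^\gamma}(x^n))}\le C\,\ep^{1-\gamma}\sqrt{|\log\ep|}\to0,
\]
using $\gamma<1$. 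Hence $Q^n_\ep\to R$, and writing $\xi^n=R^nb^n=Q^n_\ep\,\widetilde b^n$ with $\widetilde b^n:=(Q^n_\ep)^{-1}R^nb^n\in\Ss$ (because $(Q^n_\ep)^{-1}R^n\in\mathcal I(\Ss)$ by \eqref{adm} and $\mathcal I(\Ss)$ fixes $\Ss$), the vectors $\widetilde b^n$ lie in the discrete set $\Ss$ with $|\widetilde b^n|=|b^n|\le C$, so up to a subsequence they stabilize. Passing to the limit, each weight $\xi^k$ is a finite sum (over the singularities clustering at $x^k$) of terms converging to $R\widetilde b^n\in R\Ss$; as $R\Ss$ is an additive group, $\xi^k\in R\Ss$, so $\mu\in\overline X^R(\Omega)$, completing the proof.
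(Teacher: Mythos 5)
Your outline reproduces the paper's own proof architecture almost exactly: a circulation-based lower bound on disjoint annuli to control the masses (the paper's \eqref{anchedopo}), an explicit corrector carrying the circulations plus a perforated-domain rigidity estimate (Lemma \ref{lemma:comeinSZ}, quoted from \cite{SZ}) to produce $R_\ep$ and the bound $\|\beta_\ep-R_\ep\|^2_{L^2}\le C\ep^2|\log\ep|$ (the paper's \eqref{stimasuibetatilde}--\eqref{stimafond}), and the averaging constraint in \eqref{adm} combined with Jensen's inequality to identify $R$ and get $\mu\in\overline X^R(\Omega)$ (the paper's \eqref{ultimacosa}); your Step 4 is essentially verbatim the paper's final step. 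However, two of your steps would fail as written. In Step 1 you propose ``applying rigidity on $A_{\ep,\ep^\gamma}(x^n)$'': the Friesecke--James--M\"uller constant of an annulus depends on the ratio of its radii, and here that ratio $\ep^{\gamma-1}\to\infty$, so there is no uniform rigidity constant on these annuli (this degeneration is precisely why a single rotation cannot control $\beta_\ep$ across all scales of the annulus; cf.\ Remark \ref{nonfunz}). The paper instead covers $A_{\ep,\ep^\gamma}(x^n)$ by $\sim(1-\gamma)|\log\ep|/\log 2$ dyadic annuli $A_{2^k\ep,2^{k+1}\ep}(x^n)$ of fixed aspect ratio, applies \cite{FJM} on each one (the constant is then scale invariant), uses Jensen on each dyadic annulus --- the circulation of $\beta_\ep-R_k$ is still $\ep R^n b^n$ because the rotation $R_k$ is constant --- and sums over $k$ and $n$. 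Your displayed lower bound is correct, but only through some such multiscale decomposition.

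The second, more serious, gap is in Step 3. The claim that $\Cu\beta_\ep$ ``is supported on $\bigcup_n\partial B_\ep(x^n)$ with total variation controlled by $\sum_n\ep|b^n|$'' is not justified: the distributional curl concentrated on $\partial B_\ep(x^n)$ is the tangential trace $(\beta_\ep t)\,\Huno\res\partial B_\ep(x^n)$, whose total variation is the $L^1$-norm of that trace, not the modulus of its integral (which is the circulation $\ep|b^n|$). For a field that is merely $L^2$ and curl-free in the perforated domain, this trace is only an $H^{-1/2}$ object; it is not controlled by the energy bound and need not even be a finite measure, so the estimate ``mass $\le C/\sqrt{|\log\ep|}$'' has no basis. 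The conclusion $\Cu\betalin=0$ is of course true, but the paper proves it by a different device: for each $\phi\in C^1_0(\Omega)$ it constructs $\phi_\ep\to\phi$ strongly in $H^1_0(\Omega)$ with $\phi_\ep\equiv\phi(x^n_\ep)$ on each core $B_\ep(x^n_\ep)$, so that the pairing sees only the net circulations,
\begin{equation*}
\Big\langle \Cu\tfrac{\beta_\ep}{\ep},\phi_\ep\Big\rangle=\sum_{n=1}^{N_\ep}R^n_\ep b^n_\ep\,\phi(x^n_\ep)\,,
\end{equation*}
which is bounded by the mass estimate and hence vanishes after division by $\sqrt{|\log\ep|}$. (Alternatively one could select good radii $r_n\in(\ep,2\ep)$ by a mean-value argument to control the $L^1$ trace norm by $C\ep\sqrt{|\log\ep|}$, but some device of this kind is indispensable.) With these two repairs your proof coincides with the paper's.
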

\begin{definition}\label{conve}
\rm{
In the following, we write $(\mu_\ep;\beta_\ep;R_\ep)\to (\mu;\betalin;R)$ if conditions \eqref{zeroprop}, \eqref{dueprop}, and \eqref{unoprop} of Theorem \ref{teo:comp} are satisfied.
}
\end{definition}
In order to prove Theorem \ref{teo:comp} we follow the lines of the proof of \cite[Proposition 3.5]{SZ}.
The following result provides uniform rigidity estimates {\it \'a la} Friesecke-James-M\"uller \cite{FJM} for a suitable class of domains with holes. It can be proven by arguing verbatim as in \cite[Lemma 3.1]{SZ}.
\begin{lemma}\label{lemma:comeinSZ}
Let $E\subset\R^2$ be a bounded, open set with Lipschitz continuous boundary. Then there exists a constant $C>0$ depending only on $E$ such that the following property holds true. Let $r>0$ and let  $\{x^1,\dots,x^N\}\subset E$ be such that $|x^{n_1}-x^{n_2}|>4r$ for every $n_1,n_2=1,\ldots, N$ with $n_1\neq n_2$ and $\di(x^n,\partial E)>2r$ for every $n=1,\ldots,N$\,. Setting $E_r:=E\setminus \bigcup_{n=1}^{N}\overline B_r(x^n)$\,, for every $u\in H^1(E_r;\R^2)$ there exists $R\in\SO$ such that
\begin{equation*} 
\|\nabla u-R\|_{L^2(E_r;\R^{2\times 2})}\le C\|\di(\nabla u, \SO)\|_{L^2(E_r)}\,.
\end{equation*}
\end{lemma}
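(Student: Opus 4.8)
The plan is to reduce this perforated rigidity estimate to the classical Friesecke--James--M\"uller inequality \cite{FJM} applied on the \emph{hole-free} domain $E$, whose constant depends only on $E$. The device is to build, from $u\in H^1(E_r;\R^2)$, an extension $\tilde u\in H^1(E;\R^2)$ that fills each removed disc $B_r(x^n)$ while controlling $\di(\nabla\tilde u,\SO)$ on the fillings in terms of its values on thin annuli surrounding the holes. The entire difficulty, and the reason the statement is nontrivial, is uniformity: the final constant must not depend on $r$, on $N$, or on the positions $x^n$, but only on $E$. This is secured by performing the local construction on annuli of \emph{fixed} aspect ratio and exploiting scale invariance.

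First I would record the geometric consequences of the hypotheses. The condition $\di(x^n,\partial E)>2r$ gives $B_{2r}(x^n)\sub E$, hence the annulus $A_{r,2r}(x^n)\sub E_r$ and, after filling, $\tilde u$ stays defined on a subset of $E$; the condition $|x^{n_1}-x^{n_2}|>4r$ makes the enlarged discs $B_{2r}(x^n)$ pairwise disjoint, so the annuli $A_{r,2r}(x^n)$ are disjoint subsets of $E_r$. Next comes the local step: on each annulus $A_n:=A_{r,2r}(x^n)$ I would apply the FJM estimate. Since $A_n$ is the image of the unit annulus $A_{1,2}(0)$ under $y\mapsto x^n+r\,y$, and the rigidity constant is invariant under the rescaling $u\mapsto \tfrac1r u(x^n+r\,\cdot\,)$, one obtains a rotation $R_n\in\SO$ with
\[
\|\nabla u-R_n\|_{L^2(A_n)}\le C_0\,\|\di(\nabla u,\SO)\|_{L^2(A_n)},
\]
where $C_0=C_0(A_{1,2}(0))$ is \emph{independent of $r$ and $n$}. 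Writing $w:=u-R_n\,x$ on $A_n$ turns this into $\|\nabla w\|_{L^2(A_n)}\le C_0\|\di(\nabla u,\SO)\|_{L^2(A_n)}$.

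For the extension step I would fix once and for all a bounded linear extension operator $H^1(A_{1,2}(0))\to H^1(B_2(0))$. Applying it to $w(x^n+r\,\cdot\,)$ after subtracting its average over the annulus (so that Poincar\'e--Wirtinger bounds the full $H^1$ norm by the gradient seminorm), and then rescaling back, yields $\tilde w\in H^1(B_{2r}(x^n))$ agreeing with $w$ on $A_n$ and satisfying $\|\nabla\tilde w\|_{L^2(B_r(x^n))}\le C_1\|\nabla w\|_{L^2(A_n)}$, with $C_1$ independent of $r,n$ because the powers of $r$ cancel exactly in the rescaling. Setting $\tilde u:=\tilde w+R_n\,x$ on each $B_r(x^n)$ and $\tilde u:=u$ on $E_r$ gives $\tilde u\in H^1(E;\R^2)$, and since $R_n\in\SO$,
\[
\|\di(\nabla\tilde u,\SO)\|_{L^2(B_r(x^n))}\le \|\nabla\tilde u-R_n\|_{L^2(B_r(x^n))}=\|\nabla\tilde w\|_{L^2(B_r(x^n))}\le C_0C_1\,\|\di(\nabla u,\SO)\|_{L^2(A_n)}.
\]

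Finally I would apply FJM on $E$ itself to $\tilde u$, producing $R\in\SO$ with $\|\nabla\tilde u-R\|_{L^2(E)}\le C(E)\|\di(\nabla\tilde u,\SO)\|_{L^2(E)}$. Restricting the left-hand side to $E_r$, where $\tilde u=u$, and splitting the right-hand side over $E_r$ and the disjoint fillings, the extension estimate together with the disjointness bound $\sum_n\|\cdot\|_{L^2(A_n)}^2\le\|\cdot\|_{L^2(E_r)}^2$ gives $\|\di(\nabla\tilde u,\SO)\|_{L^2(E)}^2\le\bigl(1+(C_0C_1)^2\bigr)\|\di(\nabla u,\SO)\|_{L^2(E_r)}^2$, whence the claim with $C$ depending only on $E$ (through $C(E)$, $C_0$, $C_1$). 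The main obstacle is precisely this uniformity bookkeeping: the separation hypotheses are exactly what is needed to make the local annuli disjoint and to carry out the filling on discs of comparable radius, so that every auxiliary constant collapses to a scale-invariant quantity and only the hole-free FJM constant $C(E)$ survives.
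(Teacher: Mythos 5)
Your proposal is correct and follows essentially the same route as the paper, which proves this lemma by invoking \cite[Lemma 3.1]{SZ}: rigidity on the fixed-aspect-ratio annuli $A_{r,2r}(x^n)$ (with scale-invariant constant), a scale-invariant $H^1$ extension of $u-R_n x$ into each hole so that $\di(\nabla\tilde u,\SO)$ stays controlled there, and then the classical Friesecke--James--M\"uller estimate on the unperforated domain $E$. The separation hypotheses are used exactly as you use them, to make the annuli disjoint subsets of $E_r$ so that all auxiliary constants are independent of $r$, $N$, and the points $x^n$.
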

\begin{proof}[Proof of Theorem \ref{teo:comp}]
By the energetic bound \eqref{enbound} we have that $\mu_\ep\in X_\ep^\gamma(\Omega)$ and $\beta_\ep\in\AS_\ep^{\gamma}(\mu_\ep)$\,.
For every $0<\ep<1$ let $\mu_\ep:=\ep\sum_{n=1}^{N_\ep}R^n_\ep b_\ep^n\delta_{x_\ep^n}$\,.
We start by proving the compactness property for the measures $\frac{\mu_\ep}{\ep}$\,. 
We set $K_\ep^\gamma:=\lfloor(1-\gamma)\frac{|\log\ep|}{\log 2}-1\rfloor$\,.
Let $n\in\{1,\ldots,N_\ep\}$ be fixed. We have that $A_{\ep,\ep^\gamma}(x_\ep^n)\supset \bigcup_{k=0}^{K_\ep^\gamma}A_{2^k\ep,2^{k+1}\ep}(x_\ep^n)$\,. 
As a consequence of \cite{FJM}, there exists a constant $C>0$ such that for every  $k=0,1,\ldots,K_\ep^\gamma$ 
\begin{equation}\label{rian}
\int_{A_{2^k\ep,2^{k+1}\ep}(x_\ep^n)}\di^2(\beta_\ep,\SO)\ud x\ge C\int_{A_{2^k\ep,2^{k+1}\ep}(x_\ep^n)}|\beta_\ep-R_k|^2\ud x
\end{equation}
for some  rotations $R_k$;  in fact, \eqref{rian} has been proved in \cite{FJM} for gradient fields defined on sets with Lipschitz continuous boundary; moreover, the constant $C$ is invariant with respect to omotheties of the domain. Actually, in our case $\beta_\ep$ is not a gradient, but one can still deduce \eqref{rian} either covering  $A_{2^k\ep,2^{k+1}\ep}(x_\ep^n)$ with two overlapping simply connected sets with Lipschitz continuous boundary, and applying on each of them \cite{FJM}, or introducing a suitable cut in $A_{2^k\ep,2^{k+1}\ep}(x_\ep^n)$ and applying directly \cite{FJM} on the resulting simply connected cut annulus.  
By summing \eqref{rian} over $k$ and over $n$\,, using \eqref{enbound} and property \eqref{ivprop}, we deduce that
\begin{equation}\label{anchedopo}
\begin{aligned}
C\ep^2|\log\ep|\ge&\, C\sum_{n=1}^{N_\ep}\sum_{k=0}^{K_\ep^\gamma}\int_{A_{2^k\ep,2^{k+1}\ep}(x_\ep^n)}|\beta_\ep-R_k|^2\,\ud x\\
\ge&\, C\sum_{n=1}^{N_\ep}\ep^2|b^n_\ep|^2\frac{1}{2\pi} (K_\ep^\gamma+1) \log 2\, 
\ge C\ep^2|\log\ep|(1-\gamma)\Big|\frac{\mu_\ep}{\ep}\Big|(\Omega)\,,
\end{aligned}
\end{equation}
where the last but one inequality follows by applying Jensen inequality and using the very definition of $\AS^\gamma_\ep(\mu_\ep)$\,.
By \eqref{anchedopo}, we deduce that $\big|\frac{\mu_\ep}{\ep}\big|(\Omega)$ is uniformly bounded and hence that there exists a measure $\mu$\,, consisting in a finite sum of vectorial weighted Dirac deltas, such that, up to subsequences, property \eqref{dueprop} holds.

Now, we pass to the proof of property \eqref{unoprop}. To this end,
we define the maps $\bar\beta_\ep:\Omega\to\R^{2\times 2}$ as
\begin{equation*}
\bar\beta_\ep(x):=\ep\chi_{\Omega_\ep(\mu_\ep)}(x)\sum_{n=1}^{N_\ep}(R^n_\ep b^n_\ep)\otimes \mathsf{J}\frac{x-x^n_\ep}{|x-x_\ep^n|^2}\,,
\end{equation*}
where $\mathsf{J}$ is the counter clockwise $\frac\pi 2$-rotation.
One can easily check that
\begin{equation}\label{stimasuibetatilde}
\int_{\Omega_\ep(\mu_\ep)}|\bar\beta_\ep|^2\,\ud x\le C\ep^2N_\ep\sum_{n=1}^{N_\ep}|b^n_\ep|^2|\log\ep|\le C_\gamma\ep^2|\log\ep|\,,
\end{equation}
where the last inequality follows by \eqref{anchedopo}.
By construction, we have that 
$\int_{\partial U}(\beta_\ep-\bar\beta_\ep)t\,\ud\Huno=0$
for every open set $U\subset\Omega$ with $\partial U\subset\Omega_\ep(\mu_\ep)$ smooth;
hence, there exists a map $u_\ep\in H^1(\Omega_\ep(\mu_\ep);\R^2)$ such that $\beta_\ep-\bar\beta_\ep=\nabla u_\ep$ in $\Omega_\ep(\mu_\ep)$\,. By Lemma \ref{lemma:comeinSZ} we thus get that there exist a constant $C>0$ independent of $\ep$ and a sequence of rotations $\{R_\ep\}_\ep\subset\SO$ such that
\begin{equation}\label{stimafond}
\begin{aligned}
\int_{\Omega}|\beta_\ep-R_\ep|^2\ud x\le&
2\int_{\Omega_\ep(\mu_\ep)}|\beta_\ep-\bar\beta_\ep-R_\ep|^2\,\ud x+2\int_{\Omega_\ep(\mu_\ep)}|\bar\beta_\ep|^2\,\ud x+C_\gamma\ep^2\\
\le&2\int_{\Omega_\ep(\mu_\ep)}|\nabla u_\ep-R_\ep|^2\,\ud x+C_\gamma\ep^2|\log\ep|\\
\le&C\int_{\Omega_\ep(\mu_\ep)}\di^2(\nabla u_\ep,\SO)\,\ud x+C_\gamma\ep^2|\log\ep|\\
\le&C\int_{\Omega_\ep(\mu_\ep)}\di^2(\beta_\ep,\SO)\,\ud x+C_\gamma\ep^2|\log\ep|
\le C_\gamma\ep^2|\log\ep|\,,
\end{aligned}
\end{equation}
where we have used also \eqref{anchedopo}, \eqref{stimasuibetatilde}, property \eqref{ivprop}, and \eqref{enbound}\,. By \eqref{stimafond}, up to a subsequence, $\frac{\beta_\ep-R_\ep}{\ep \sqrt{|\log\ep|}} \weakly \betalin$ in $L^2(\Omega;\R^{2\times 2})$ for some  $\betalin\in L^2(\Omega;\R^{2\times 2})$,  so that property \eqref{unoprop} holds true. 

Now we prove that $\Cu\betalin=0$ in the distributional sense. To this end, let $\phi\in C^1_0(\Omega)$ and let $\{\phi_\ep\}_\ep\subset H^1_0(\Omega)$ be a family converging to $\phi$ strongly in $H^1_0(\Omega)$ and such that $\phi_\ep\equiv \phi(x^n_\ep)$ in $B_\ep(x^n_\ep)$ for every $n=1,\ldots,N_\ep$\,. Then, by \eqref{anchedopo}, we get
\begin{equation*}
\begin{aligned}
\langle\Cu\betalin,\phi\rangle=&\lim_{\ep\to 0}\frac{1}{\sqrt{|\log\ep|}}\Big\langle\Cu\frac{\beta_\ep-R_\ep}{\ep},\phi_\ep\Big\rangle=\lim_{\ep\to 0}\frac{1}{\sqrt{|\log\ep|}}\Big\langle\Cu\frac{\beta_\ep}{\ep},\phi_\ep\Big\rangle\\
=&\lim_{\ep\to 0}\frac{\sum_{n=1}^{N_\ep}R^n_\ep b^n_\ep\phi(x^n_\ep)}{\sqrt{|\log\ep|}}=0\,;
\end{aligned}
\end{equation*}
by the arbitrariness of $\phi$ we have that $\Cu\betalin=0$ in the distributional sense.

Furthermore, since $\SO$ is compact, up to a subsequence $R_\ep\to  R$ for some $ R\in \SO$. In order to conclude the  proof of    the theorem  it remains to show that  $\mu\in\overline{X}^R(\Omega)$\,. First, 
for every $n=1,\ldots,N_\ep$\,, let $\tilde R_\ep^n\in\mathcal I(\Ss)$ be such that $\fint_{A_{\ep,\ep^{\gamma}}(x_\ep^n)}\beta_\ep\,\ud x=R_\ep^n\tilde R_\ep^n$\,.
In view of \eqref{anchedopo}, we can assume, up to a subsequence, that $N_\ep\equiv N$ for some $N\in\N$ independent of $\ep$\,. For every fixed $n=1,\ldots,N$\,, it holds
\begin{equation}\label{ultimacosa}
|R_\ep^n\tilde R_\ep^n-R_\ep|^2
=\Big|\fint_{A_{\ep,\ep^{\gamma}}(x^n_\ep)}(\beta_\ep-R_\ep)\,\ud x\Big|^2
\le C_{\gamma}\ep^{2(1-\gamma)}|\log\ep|\,,
\end{equation}
where the last inequality is a consequence of Jensen inequality and of \eqref{stimafond}.
Since $\mathcal{I}(\Ss)$ is compact, we have that, up to a further subsequence, $\tilde R_\ep^n\to \tilde R^n$ for some $\tilde R^n\in\mathcal{I}(\Ss)$\,. This fact together with \eqref{ultimacosa}, yields that $R_\ep^n\to R (\tilde{R}^n)^{\mathrm{T}}$ as $\ep\to 0$ and hence
 $\mu\in\overline{X}^R(\Omega)$\,.
\end{proof}
\begin{remark}
\rm{
In this paper, the assumption that $\Omega$ is simply connected is used to guarantee that the class of $L^2$ curl-free matrix fields on $\Omega$ coincides with the class of gradients of functions in $H^{1}(\Omega;\R^2)$. Such an assumption can be easily dropped if we replace, in the compactness result above and in the 
$\Gamma$-convergence result below, the condition $\betalin \in L^2(\Om;\R^{2\times 2})$ with $\Cu\betalin=0$ by the condition $\betalin=\nabla u$ for some $u\in H^{1} (\Omega;\R^2)$\,.
}
\end{remark}
\begin{remark}\label{nonfunz}
\rm{
As mentioned in the introduction, the condition on the average of the strain field in \eqref{adm} is needed  in Theorem \ref{teo:comp} to guarantee  
the compatibility condition that $R$ is both the asymptotic rotation  around which the linearization is performed and the rotation of the lattice where the weights of the limit singularities lie.
Indeed, dropping such an average condition from the definition of \eqref{adm}, one could consider  $\mu_\ep\equiv \ep b\delta_0$ (with $0\in\Omega$ and $b\in\mathbb{S}$)\,,  $R$ an arbitrary rotation in $\SO$ and the field $\beta_\ep=R+\ep\beta_{\R^2}^{b,\C}$ (where $\beta_{\R^2}^{\zeta,\C}$ is the Green function for the corresponding linearized problem, see \eqref{betapiano} below); such a field (once dropped the average condition of \eqref{adm}) would be an admissible  strain. Now, the weight of the singularity lies on the set $\Ss$, while  $\beta_\ep$ is linearized around the arbitrarily fixed rotation $R$, providing an unphysical lack of rigidity in the class of admissible Burgers vectors. 

Furthermore, we stress that such an average condition should be required on ``thick annuli'' with radii $\ep,\ep^\gamma$ and cannot be replaced by a weaker condition on ``thin'' annuli with radii $\ep,M\ep$ for some $M>1$ (independent of $\ep$)\,,
as shown by the following computation.
Let $\vartheta_\ep:[\ep,1]\to [0,1]$ be such that $\vartheta_\ep(t)\equiv 0$ in $[\ep,M\ep)$\,, $\vartheta_\ep(t)\equiv1$ in $[2M\ep,1]$ and $\vartheta_\ep(t)=\frac{t}{M\ep}-1$ in $[M\ep,2M\ep)$ and let $u_\ep:A_{\ep,1}(0)\to \R^2$ be the function defined by $u_\ep(x):=R(\vartheta_\ep(|x|))x$\,, where
$$
R(\vartheta):=\left(
\begin{array}{ll}
\cos\vartheta&\sin\vartheta
\\
-\sin\vartheta&\cos\vartheta
\end{array}
\right)\,.
$$
By direct computations,
\begin{equation*}
\begin{aligned}
\nabla u_\ep(x)=&\,R(\vartheta_\ep(|x|))
\\
&\,+\frac{\dot{\vartheta}_\ep(|x|)}{|x|}\left(
\begin{array}{ll}
-\sin\vartheta_\ep(|x|)x_1^2+\cos\vartheta_\ep(|x|)x_1x_2&-\sin\vartheta_\ep(|x|)x_1x_2+\cos\vartheta_\ep(|x|)x^2_2
\\
-\cos\vartheta_\ep(|x|)x_1^2-\sin\vartheta_\ep(|x|)x_1x_2&-\sin\vartheta_\ep(|x|)x_2^2-\cos\vartheta_\ep(|x|)x_1x_2
\end{array}
\right)\,,
\end{aligned}
\end{equation*}
whence we deduce that
\begin{equation*}
\int_{A_{\ep,1}(0)}\di^2(\nabla u_\ep,\SO)\,\ud x\le \frac{C}{M^2\ep^2}\int_{A_{M\ep,2M\ep}(0)}|x|^2\,\ud x\le CM^2\ep^2\,.
\end{equation*}
In this example, the average of $\nabla u_\ep$ on the annulus $A_{\ep,M\ep}(0)$ is the identity matrix $R(0)$, while $\nabla u_\ep \to R(1)$  (for instance in $L^2(\Om;\R^{2\times 2})$).
The computation above shows that the fields $\beta_\ep=R(\vartheta_\ep)+\ep\beta_{\R^2}^{b,\C}$ (with $b\in\Ss$), would satisfy the average assumption with $R=R(0)=\mathrm{Id}$ in the annulus $A_{\ep,M\ep}(0)$\,, but converge to the rotation $R(1)$\,.
Therefore, prescribing the condition on the average of the admissible fields  on the annuli of radii $\ep$ and $M\ep$ (for $M$ fixed) does not provide any relationship between the rotation of the lattice where the Burgers vector lies and the rotation around which the energy is linearized.

On the other hand, in order to let our proof (in particular, \eqref{ultimacosa}) work, one can easily see that the average condition in   \eqref{adm}  can be weakened by requiring that 
\begin{equation*}
\di\bigg(\fint_{A_{\ep,\ep^\gamma}(x^n)}\beta\,\ud x,R^n\mathcal{I}(\Ss)\bigg)\le \delta_\ep\,,
\end{equation*}
for some $\delta_\ep>0$ with $\delta_\ep\to 0$ as $\ep\to 0$\,.
}
\end{remark}
\subsection{$\Gamma$-convergence}
This subsection is devoted to the  $\Gamma$-convergence result for the semi-discrete energy in \eqref{enercont}.
In order to define the $\Gamma$-limit we first introduce the self-energy of an edge dislocation. Let $\C$ be a given elasticity tensor in linear elasticity.
For every $\zeta \in\R^2\setminus\{0\}$, the corresponding displacement $u^{\zeta,\C}_{\R^2}$ and strain $\beta_{\R^2}^{\zeta,\C}$\,, induced by the edge dislocation $\zeta\delta_0$ in the whole plane, have been  explicitly computed in the literature (see, for instance, \cite[formula 4.1.25]{BBS}); here, we recall some of their properties we need in our analysis. 
 The strain field $\beta_{\R^2}^{\zeta,\C}$ satisfies the circulation condition
\begin{equation*}
\mathrm{Curl}\,\beta=\zeta\delta_{0}\textrm{ in }\R^2
\end{equation*}
and the equilibrium equation
\begin{equation*}
\mathrm{Div}\,\C\beta=0\textrm{ in }\R^2\,.
\end{equation*} 
In polar coordinates, $\beta_{\R^2}^{\zeta,\C}$ takes the form 
\begin{equation}\label{betapiano}
\beta^{\zeta,\C}_{\R^2}(\rho,\theta):=\frac{1}{\rho} \Gamma^{\zeta,\C}(\theta)\,, 
\end{equation}
where 
(see \cite[Remark 7]{GLP}) $\Gamma^{\zeta,\C}$ is the unique minimizer of $\int_{0}^{2\pi}\C\Gamma:\Gamma\,\ud\theta$ among the functions $\Gamma$ of the form 
$$
\Gamma(\theta):=f(\theta)\otimes(-\sin\theta;\cos\theta)+g\otimes(\cos\theta;\sin\theta)\, ,
$$
with $g\in\R^2$ and $f\in C^0([0,2\pi];\R^2)$ satisfying  $f(0)=f(2\pi)$ and $\int_{0}^{2\pi}f(\omega)\,\ud\omega=\zeta$\,.
The optimal  $g^{\zeta,\C}$ and  $f^{\zeta,\C}$ are uniquely determined by the vector $\zeta$ and the tensor $\C$\,.
\begin{remark}\label{zeromean}
We highlight that
\begin{equation}\label{medianulla}
\int_{0}^{2\pi}\Gamma^{\zeta,\C}(\theta)\,\ud\theta=0\,,
\end{equation}
 and hence, for all $0<r<R$,  
$\int_{A_{r,R}(0)}\beta_{\R^2}^{\zeta,\C}\,\ud x=0$\,.
Indeed, let $\bar{f}^{\zeta,\C}$ be the $2\pi$-periodic extension of $f^{\zeta,\C}$ 
and consider
\begin{equation*}
\hat\Gamma^{\zeta,\C}(\theta):=-\Gamma^{\zeta,\C}(\theta+\pi)=\bar{f}^{\zeta,\C}(\theta+\pi)\otimes(-\sin\theta;\cos\theta)+g^{\zeta,\C}\otimes(\cos\theta;\sin\theta)\,;
\end{equation*} 
Notice that the circulation of $\hat\Gamma^{\zeta,\C}$ coincides with that of $\Gamma^{\zeta,\C}$\,.
Since, by convexity, 
\begin{equation*}
\begin{aligned}
\int_{0}^{2\pi}\C\Big(\frac 1 2\Gamma^{\zeta,\C}+\frac 1 2\hat\Gamma^{\zeta,\C}\Big):\Big(\frac 1 2\Gamma^{\zeta,\C}+\frac 1 2\hat\Gamma^{\zeta,\C}\Big)\,\ud\theta\le& \frac 1 2\int_{0}^{2\pi}\C\Gamma^{\zeta,\C}:\Gamma^{\zeta,\C}\,\ud\theta+\frac 1 2  \int_{0}^{2\pi}\C\hat\Gamma^{\zeta,\C}:\hat\Gamma^{\zeta,\C}\,\ud\theta\\
= \int_{0}^{2\pi}\C\Gamma^{\zeta,\C}:\Gamma^{\zeta,\C}\,\ud\theta\,,
\end{aligned}
\end{equation*}
by strict convexity we deduce that 
\begin{equation}\label{precirem}
\Gamma^{\zeta,\C}(\cdot)=\hat\Gamma^{\zeta,\C}(\cdot)=-\Gamma^{\zeta,\C}(\cdot+\pi)\,;
\end{equation} 
therefore, $\Gamma^{\zeta,\C}$ is an odd function and hence \eqref{medianulla} holds true.
\end{remark}
For every $\zeta\in\R^2\setminus \{0\}$ we set 
\begin{equation}\label{servealimsup}
\psi^{\C}(\zeta):=\int_{0}^{2\pi}\frac{1}{2}\C\,\Gamma^{\zeta,\C}(\theta):\Gamma^{\zeta,\C}(\theta)\,\ud\theta
=\frac{1}{|\log r|}\int_{A_{r,1}(0)}\frac{1}{2}\C\beta^{\zeta,\C}_{\R^2}:\beta^{\zeta,\C}_{\R^2}\,\ud x\,,\qquad  0<r<1\,.
\end{equation}
By \cite[Proposition 1]{DGP} (see also \cite[Corollary 6]{GLP}), we have
\begin{equation}\label{perliminf}
\psi^{\C}(\zeta) = \lim_{\frac{r_2}{r_1} \to +\infty}\psi_{r_1,r_2}^{\C}(\zeta)\,,
\end{equation}
where 
\begin{equation}\label{perliminfgiusta}
\psi_{r_1,r_2}^{\C}(\zeta):=\frac{1}{\log \frac{r_2}{r_1}}\min_{\newatop{\beta\in L^2(A_{r_1,r_2}(0);\R^{2\times 2})}{\newatop{\Cu\beta=0\textrm{ in }A_{r_1,r_2}(0)}{\int_{\partial B_r(0)}\beta t\,\ud\Huno=\zeta}}}\int_{A_{r_1,r_2}(0)}\C\beta:\beta\,\ud x\,.
\end{equation}
Moreover,  for every $\zeta\in\R^2$ and for every $s\in\R$
\begin{equation}\label{bounds}
\psi^\C(s\zeta)=s^2\psi^{\C}(\zeta) \, .
\end{equation}
Finally, for every $b\in\Ss$ we define 
\begin{equation}\label{generffi}
\ffi^{\C}(b):=\min\bigg\{\sum_{i=1}^{I}|z_i|\psi^{\C}(b_i)\,:\, z_i\in\Z\,,\, b_i\in\Ss\,,\,N\in\N\,,\,\sum_{i=1}^{I}z_ib_i=b\bigg\}
\end{equation}
(with $\ffi^\C(b) =0$ for $b=0$).
We are now in a position to state our $\Gamma$-convergence result for the nonlinear energy $\F_{\ep}^\gamma$ defined in \eqref{enercont}.
\begin{theorem}\label{teo:Gamma-conv}
The following $\Gamma$-convergence result holds true.
\begin{itemize}
\item[(i)] ($\Gamma$-liminf inequality) Let $R\in\SO$\,, $\mu=\sum_{k=1}^K\xi^k\delta_{x^k}\in \overline{X}^R(\Omega)$\,, $\betalin\in L^2(\Omega;\R^{2\times 2})$ with $\mathrm{Curl}\,\betalin=0$\,. For every $\{(\mu_\ep;\beta_\ep;R_\ep)\}_\ep\subset\M(\R^2;\R^2)\times L^2(\Omega;\R^{2\times 2})\times \SO$ such that $(\mu_\ep;\beta_\ep;R_\ep)\to (\mu;\betalin;R)$ (in the sense of Definition \ref{conve}), it holds
\begin{equation}\label{liminfformula}
\sum_{k=1}^K\ffi^\C(R^{\mathrm{T}}\xi^k)+\int_{\Omega}\frac 1 2\C R^{\mathrm{T}}\betalin:R^{\mathrm{T}}\betalin\,\ud x\le\liminf_{\ep\to 0}\frac{1}{\ep^2|\log\ep|}\F^{\gamma}_\ep(\mu_\ep,\beta_\ep)\,.
\end{equation}
\item[(ii)] ($\Gamma$-limsup inequality) Let $R\in\SO$\,, $\mu=\sum_{k=1}^K\xi^k\delta_{x^k}\in \overline{X}^R(\Omega)$ and $\betalin\in L^2(\Omega;\R^{2\times 2})$ with $\mathrm{Curl}\,\betalin=0$\,.
 Then, there exists $\{(\mu_\ep;\beta_\ep;R_\ep)\}_\ep\subset\M(\R^2;\R^2)\times L^2(\Omega;\R^{2\times 2})\times \SO$ such that $(\mu_\ep;\beta_\ep;R_\ep)\to (\mu;\betalin;R)$ and
\begin{equation}\label{limsupformula}
\sum_{k=1}^K\ffi^\C(R^{\mathrm{T}}\xi^k)+\int_{\Omega}\frac 1 2\C R^{\mathrm{T}}\betalin:R^{\mathrm{T}}\betalin\,\ud x\ge\limsup_{\ep\to 0}\frac{1}{\ep^2|\log\ep|}\F^{\gamma}_\ep(\mu_\ep,\beta_\ep)\,.
\end{equation}
\end{itemize}
\end{theorem}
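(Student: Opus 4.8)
The plan is to prove the two inequalities separately, reducing the nonlinear functional to its linearization via frame indifference \eqref{iiprop} and a second-order Taylor expansion of $W$ at $\SO$ (legitimate by \eqref{vprop}), and then matching the outcome with the cell-formula characterization \eqref{perliminf}--\eqref{perliminfgiusta} of $\psi^\C$ and with the relaxation \eqref{generffi} defining $\ffi^\C$. For the $\Gamma$-liminf inequality I may assume the right-hand side of \eqref{liminfformula} finite and pass to a subsequence realizing the liminf; by \eqref{enbound} one has $N_\ep\equiv N$ eventually, and writing $\mu_\ep=\ep\sum_{n=1}^N R^n_\ep b^n_\ep\delta_{x^n_\ep}$ I group the dislocations into clusters according to their limit point $x^k$, with effective rotated charges $c^n:=\lim_\ep R^{\mathrm{T}}R^n_\ep b^n_\ep\in\Ss$ summing to $R^{\mathrm{T}}\xi^k$ on each cluster (as in the closing argument of Theorem \ref{teo:comp}). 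Setting $\eta_\ep:=\tfrac{R^{\mathrm{T}}_\ep\beta_\ep-\mathrm{Id}}{\ep\sqrt{|\log\ep|}}\weakly R^{\mathrm{T}}\betalin$, the Taylor expansion (the innermost cores contributing only $o(\ep^2|\log\ep|)$ and the remainder controlled via \eqref{stimafond}) reduces the liminf to bounding $\liminf_\ep\int_{\Omega_\ep(\mu_\ep)}\tfrac12\C\eta_\ep:\eta_\ep\,\ud x$ from below; on the far field $\Omega\setminus\bigcup_k B_\rho(x^k)$ (fixed small $\rho>0$) weak lower semicontinuity of the quadratic form then produces $\int_{\Omega\setminus\bigcup_k B_\rho(x^k)}\tfrac12\C R^{\mathrm{T}}\betalin:R^{\mathrm{T}}\betalin\,\ud x$, which recovers the full integral as $\rho\to0$.

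The self-energy near each cluster is the delicate point. The circulation of $\eta_\ep$ about $B_\ep(x^n_\ep)$ is asymptotically $c^n/\sqrt{|\log\ep|}$, so on the mutually disjoint annuli $A_{\ep,\ep^\gamma}(x^n_\ep)$, disjoint thanks to the $4\ep^\gamma$-separation in \eqref{mcont}, the cell formula \eqref{perliminfgiusta}, its asymptotics \eqref{perliminf} and the $2$-homogeneity \eqref{bounds} give $\int_{A_{\ep,\ep^\gamma}(x^n_\ep)}\tfrac12\C\eta_\ep:\eta_\ep\,\ud x\ge(1-\gamma)\psi^\C(c^n)-o(1)$; while on a collective annulus $A_{s_\ep,\rho}(x^k)$ enclosing the whole cluster (radius $s_\ep\to0$) the total enclosed circulation is $R^{\mathrm{T}}\xi^k/\sqrt{|\log\ep|}$, contributing a further $\gamma\,\psi^\C(R^{\mathrm{T}}\xi^k)-o(1)$. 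Since $\{c^n\}_{n\to k}$ is an admissible integer decomposition of $R^{\mathrm{T}}\xi^k$ in the sense of \eqref{generffi}, both $\sum_{n\to k}\psi^\C(c^n)$ and $\psi^\C(R^{\mathrm{T}}\xi^k)$ are at least $\ffi^\C(R^{\mathrm{T}}\xi^k)$, so the convex split $(1-\gamma)+\gamma$ yields the bound $\ffi^\C(R^{\mathrm{T}}\xi^k)$ on each cluster; summing over $k$ and adding the far-field term completes the proof of \eqref{liminfformula}.

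For the $\Gamma$-limsup inequality I construct a recovery sequence with $R_\ep\equiv R$. Fixing for each $k$ an optimal decomposition $R^{\mathrm{T}}\xi^k=\sum_i z^k_i b^k_i$ attaining the minimum in \eqref{generffi} (with, after absorbing signs into the $b^k_i\in\Ss$, $z^k_i\in\N$), I place near $x^k$ exactly $z^k_i$ dislocations of Burgers vector $Rb^k_i$, mutually separated by a scale $\eta_\ep$ with $\ep^\gamma\ll\eta_\ep\to0$ (say $\eta_\ep=1/|\log\ep|$): this respects the $4\ep^\gamma$-constraint of \eqref{mcont}, forces $\mu_\ep/\ep\weakstar\mu$, and---crucially---lets each individual self-energy span $\log(\eta_\ep/\ep)\sim|\log\ep|$ scales, while the collective field beyond $\eta_\ep$ costs only $O(\log|\log\ep|)=o(|\log\ep|)$. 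In each $B_{\eta_\ep/2}(x^n_\ep)$ the strain carries the optimal profile $R\,\ep\,\beta^{\,b^k_i,\C}_{\R^2}(\cdot-x^n_\ep)$ from \eqref{betapiano}---whose row-wise $\Cu$ reproduces the circulation $\ep Rb^k_i$ of \eqref{adm} and whose annular average vanishes by \eqref{medianulla}, so that $\fint_{A_{\ep,\ep^\gamma}(x^n_\ep)}\beta_\ep\in R\mathcal{I}(\Ss)$ holds exactly---glued at the level of potentials to the far-field value $\beta_\ep:=R+\ep\sqrt{|\log\ep|}\,\widetilde\betalin$, with $\widetilde\betalin$ a smooth curl-free approximation of $\betalin$. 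Since the profiles are supported in vanishing balls, $(\beta_\ep-R)/(\ep\sqrt{|\log\ep|})\weakly\betalin$ and Definition \ref{conve} holds; evaluating $W$ via \eqref{servealimsup} gives core energy $\sum_k\big(\sum_i z^k_i\psi^\C(b^k_i)\big)\ep^2|\log\ep|=\sum_k\ffi^\C(R^{\mathrm{T}}\xi^k)\,\ep^2|\log\ep|$ and far-field energy $\big(\int_\Omega\tfrac12\C R^{\mathrm{T}}\betalin:R^{\mathrm{T}}\betalin\,\ud x\big)\ep^2|\log\ep|$, the cutoff and cross terms being of lower order; a diagonal argument in the approximation of $\betalin$ then yields \eqref{limsupformula}.

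The main obstacle is the cluster lower bound in the liminf: one must extract the relaxed density $\ffi^\C$ rather than the bare sum $\sum_{n\to k}\psi^\C(c^n)$, a genuine difficulty since the $\ep$-dislocations may merge while the quadratic $\psi^\C$ is neither sub- nor super-additive. The two-scale split used above is transparent only for tight clusters (radius comparable to $\ep^\gamma$); for clusters spread over intermediate scales one must instead apply \eqref{perliminfgiusta} on all dyadic annuli between $\ep$ and $\rho$ and invoke the minimality in \eqref{generffi} for each partial charge enclosed at each scale, a multi-scale bookkeeping reconciling the cell formula with the relaxation that constitutes the technical heart of the argument.
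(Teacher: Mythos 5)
Your $\Gamma$-limsup argument is essentially the paper's: the authors reduce, via ``standard density arguments'', to the case $\ffi^\C(R^{\mathrm{T}}\xi^k)=\psi^\C(R^{\mathrm{T}}\xi^k)$ and then glue the planar profile $\beta^{\zeta,\C}_{\R^2}$ to the far field, with the average constraint in \eqref{adm} satisfied exactly thanks to the zero annular mean of Remark \ref{zeromean} --- precisely the mechanism you use. Your variant makes the density step explicit by splitting each $\xi^k$ into its optimal decomposition at the intermediate separation $\eta_\ep=1/|\log\ep|\gg\ep^\gamma$, and your scale accounting ($\log(\eta_\ep/\ep)\sim|\log\ep|$ per profile, $O(\ep^2\log|\log\ep|)$ for the collective field) is correct; this half is sound.

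The $\Gamma$-liminf, however, has a genuine gap, and it sits exactly where you flag it: the two-scale split (individual annuli $A_{\ep,\ep^\gamma}(x^n_\ep)$ plus one collective annulus $A_{s_\ep,\rho}(x^k)$) proves \eqref{core} only for clusters of diameter $O(\ep^\gamma)$. Concretely, take $K=1$, $R=\mathrm{Id}$, and two dislocations with charges $e_1$ at mutual distance $\ep^{q}$ with $0<q<\gamma$ (allowed by \eqref{mcont}). Your collective annulus must have inner radius $s_\ep\gtrsim\ep^{q}$, so it harvests only $q\,\psi^\C(2e_1)=4q\,\psi^\C(e_1)$ (using \eqref{bounds}), and your scheme yields in total $2(1-\gamma)\psi^\C(e_1)+4q\,\psi^\C(e_1)=2\psi^\C(e_1)\,(1-\gamma+2q)$, which is strictly below $\ffi^\C(2e_1)=2\psi^\C(e_1)$ whenever $q<\gamma/2$: the band of scales $[\ep^\gamma,\ep^{q}]$ is simply lost. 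Recovering it is the actual content of the paper's proof: a ball-construction bookkeeping in which one tracks the number $g^k_\ep(q)$ of connected components of $\bigcup_n B_{\ep^q}(x^n_\ep)$ as a function of the scale exponent (monotone, hence with at most $N^k$ jumps, whose limit positions $q^{k,m}$ partition $(0,1)$ into finitely many bands), builds disjoint annuli $C^{k,m,i}_\ep$ on each band, verifies via \eqref{ultimacosa} that the enclosed charges $\hat\xi^{k,m,i}$ are quantized in $R\Ss$ and sum to $\xi^k$ at \emph{every} band, and then invokes minimality in \eqref{generffi} band by band before summing the band lengths. Your closing sentence names this program but does not execute it, and without it the claimed bound fails as shown. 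A second, more technical defect: your passage from $W$ to the quadratic form ``via Taylor expansion, remainder controlled by \eqref{stimafond}'' is not justified, because the strains are not pointwise close to $\SO$ near the cores; the paper needs the truncation device of Lemma \ref{linearlemma} (the cut-off $\chi_{\delta_l}$ on $\{|\nabla v_{\delta_l}|<\delta_l^{-\alpha}\}$, in the spirit of \cite{DMNP}, after the rigidity estimate), and moreover the resulting error $\sigma_\ep$ in \eqref{p00} must be \emph{uniform} over the $\sim|\log\ep|$ dyadic annuli for the summation in \eqref{core0} to close --- a uniformity your per-annulus expansion does not provide.
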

Before giving the proof of Theorem \ref{teo:Gamma-conv} we state and prove the following linearization lemma for curl-free matrix fields, 
arguing as in the proof of  \cite[Proposition 3.11]{SZ}. We mention that this kind of linearization results in presence of defects generalizes the case of gradient fields originally treated in \cite{DMNP}, all relying on the Rigidity Estimate in \cite{FJM}.
\begin{lemma}\label{linearlemma}
Let $r>1$\,.
For every $\delta>0$\,, let $\zeta_\delta\in\R^2$ and $\beta_\delta\in L^2( A_{1, r}(0); \R^{2\times 2})$ be such that $\int_{\partial B_1(0)}\beta_\delta t\,\ud \Huno=\zeta_\delta$ and $\Cu\beta_\delta=0$ in $A_{1, r}(0)$\,. Assume that  ${\beta_\delta}\to R$ in $L^2(A_{1,r}(0);\R^{2\times 2})$ and $\frac{\zeta_\delta}{\delta}\to\zeta$ (as $\delta\to 0$), for some $\zeta\in\R^2$ and $R\in\SO$. Then
\begin{equation}\label{nuova}
\liminf_{\delta\to 0}\frac{1}{\delta^2}\int_{A_{1,r}(0)}W(\beta_\delta)\,\ud x\ge \log r\,\psi_{1,r}^{\C}(R^{\mathrm{T}}\zeta)\,.
\end{equation}
\end{lemma}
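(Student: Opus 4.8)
The plan is to adapt the linearization-via-rigidity argument of \cite[Proposition 3.11]{SZ} and \cite{DMNP} to the present annulus carrying a prescribed (small) circulation. We may assume the left-hand side of \eqref{nuova} is finite and, passing to a subsequence realizing the $\liminf$, that $\frac{1}{\delta^2}\int_{A_{1,r}(0)}W(\beta_\delta)\,\ud x\le C$. By frame indifference \eqref{iiprop} I first reduce to $R=\mathrm{Id}$: setting $G_\delta:=R^{\mathrm{T}}\beta_\delta$ we have $W(\beta_\delta)=W(G_\delta)$, $\Cu G_\delta=0$ (the rows of $G_\delta$ are fixed linear combinations of the curl-free rows of $\beta_\delta$), $\int_{\partial B_1(0)}G_\delta t\,\ud\Huno=R^{\mathrm{T}}\zeta_\delta$, and $G_\delta\to\mathrm{Id}$ strongly in $L^2$. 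I then introduce the scaled strain $\eta_\delta:=\frac{G_\delta-\mathrm{Id}}{\delta}$, which is curl-free and satisfies $\int_{\partial B_1(0)}\eta_\delta t\,\ud\Huno=\frac{R^{\mathrm{T}}\zeta_\delta}{\delta}\to R^{\mathrm{T}}\zeta$, since a constant matrix carries no circulation.

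The core of the argument is a uniform $L^2$ bound, and establishing it is the main obstacle. From the energy bound and coercivity \eqref{ivprop}, $\frac{1}{\delta}\di(G_\delta,\SO)$ is bounded in $L^2(A_{1,r}(0))$. Since $G_\delta\to\mathrm{Id}$ strongly, the ``bad set'' $B_\delta:=\{|G_\delta-\mathrm{Id}|>\rho\}$ (for a fixed small $\rho>0$) has vanishing measure, and on its complement the geometry of $\SO$ near $\mathrm{Id}$ together with \eqref{vprop} gives $\di(G_\delta,\SO)\ge\frac{1}{2}|\sym(G_\delta-\mathrm{Id})|$, so that $\sym\eta_\delta$ is bounded in $L^2(A_{1,r}(0)\setminus B_\delta)$; a truncation \emph{à la} \cite{DMNP,FJM} upgrades this to a bound on the symmetric part of a field agreeing with $\eta_\delta$ off $B_\delta$. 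Writing $\eta_\delta=b_\delta+\nabla w_\delta$ with $b_\delta:=\frac{1}{2\pi}\frac{R^{\mathrm{T}}\zeta_\delta}{\delta}\otimes\mathsf{J}\frac{x}{|x|^2}$ (which carries the circulation and is bounded in $L^2$), Korn's inequality on the Lipschitz annulus controls $\nabla w_\delta$ up to a constant skew matrix $W_\delta$. I therefore replace $\eta_\delta$ by $\tilde\eta_\delta:=\eta_\delta-W_\delta$, which is bounded in $L^2$; crucially, subtracting the constant infinitesimal rotation $W_\delta$ changes neither $\Cu\tilde\eta_\delta=0$ nor the circulation $\int_{\partial B_1(0)}\tilde\eta_\delta t\,\ud\Huno$, and since $\C=\frac{\pa^2 W}{\pa M^2}(\mathrm{Id})$ annihilates skew matrices (by \eqref{tlin} and \eqref{iiprop}) it leaves the quadratic form $\C\eta_\delta:\eta_\delta=\C\tilde\eta_\delta:\tilde\eta_\delta$ unchanged.

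Up to a further subsequence $\tilde\eta_\delta\weakly\tilde\eta$ in $L^2(A_{1,r}(0);\R^{2\times2})$; passing the curl-free constraint and the circulation to the weak limit gives $\Cu\tilde\eta=0$ and $\int_{\partial B_1(0)}\tilde\eta t\,\ud\Huno=R^{\mathrm{T}}\zeta$, so that $\tilde\eta$ is admissible in the minimum problem \eqref{perliminfgiusta} defining $\psi_{1,r}^{\C}(R^{\mathrm{T}}\zeta)$. For the energy I discard $B_\delta$ (using $W\ge0$) and Taylor-expand on $A_{1,r}(0)\setminus B_\delta$: by \eqref{vprop}, $\frac{1}{\delta^2}W(\mathrm{Id}+\delta\eta_\delta)\ge\frac{1}{2}\C\eta_\delta:\eta_\delta-\omega(\delta|\eta_\delta|)|\eta_\delta|^2$ for a modulus $\omega$ with $\omega(0^+)=0$, and the error vanishes there since $\delta|\eta_\delta|\le\delta\rho$. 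Replacing $\C\eta_\delta:\eta_\delta$ by $\C\tilde\eta_\delta:\tilde\eta_\delta$ and invoking the weak lower semicontinuity of the convex functional $\chi\mapsto\int_{A_{1,r}(0)}\frac{1}{2}\C\chi:\chi\,\ud x$, I conclude
\begin{equation*}
\liminf_{\delta\to0}\frac{1}{\delta^2}\int_{A_{1,r}(0)}W(\beta_\delta)\,\ud x\ge\int_{A_{1,r}(0)}\tfrac{1}{2}\C\tilde\eta:\tilde\eta\,\ud x\ge\log r\,\psi_{1,r}^{\C}(R^{\mathrm{T}}\zeta),
\end{equation*}
the last inequality being the minimality defining $\psi_{1,r}^{\C}$. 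The two delicate points throughout are the boundedness step (taming the bad set) and the bookkeeping ensuring that the circulation is preserved in the weak limit.
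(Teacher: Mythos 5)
Your reduction to $R=\mathrm{Id}$, the treatment of the circulation, and the final lower-semicontinuity step are all sound, but the central step --- the uniform $L^2$ bound via Korn's inequality after subtracting a constant skew matrix --- contains a genuine error, and it is precisely the step where the paper's use of the rigidity estimate of \cite{FJM} cannot be replaced. First, the pointwise inequality $\di(G_\delta,\SO)\ge\tfrac12|\mathrm{sym}(G_\delta-\mathrm{Id})|$ on the complement of the bad set is false: the distance to $\SO$ controls $|\mathrm{sym}(M-\mathrm{Id})|$ only up to a quadratic error $C|M-\mathrm{Id}|^2$ coming from the curvature of $\SO$ (take $M=R(\theta)$ a rotation by a small angle $\theta$: the left-hand side is $0$, while $|\mathrm{sym}(R(\theta)-\mathrm{Id})|=\sqrt2\,(1-\cos\theta)\sim\theta^2/\sqrt2$). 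After dividing by $\delta$ this error is of order $\delta|\eta_\delta|^2$ and cannot be absorbed without the very bound you are trying to prove. Second, and decisively, the intermediate inequality you derive from it is false. Take $\beta_\delta\equiv R\,R(\sqrt\delta)$ constant on $A_{1,r}(0)$: all hypotheses of the lemma hold with $\zeta_\delta=0$, $\zeta=0$, the bad set is eventually empty, and $\frac{1}{\delta^2}\int_{A_{1,r}(0)}W(\beta_\delta)\,\ud x=0$ by frame indifference \eqref{iiprop}. Yet $\eta_\delta=\frac{R(\sqrt\delta)-\mathrm{Id}}{\delta}$ has skew part of order $\delta^{-1/2}$, and after subtracting the constant skew matrix $W_\delta$ selected by Korn one gets $\tilde\eta_\delta=\frac{\cos\sqrt\delta-1}{\delta}\,\mathrm{Id}\to-\tfrac12\mathrm{Id}$, so that $\int_{A_{1,r}(0)}\tfrac12\C\tilde\eta:\tilde\eta\,\ud x>0$ (the quadratic form $\C$ is positive definite on symmetric matrices by \eqref{ivprop} and \eqref{vprop}). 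Your displayed chain then reads $0\ge c>0$. The point is that an additive subtraction of a constant infinitesimal rotation does not remove the spurious symmetric part $-\frac{\theta_\delta^2}{2\delta}\mathrm{Id}$ generated by a rotation angle $\theta_\delta\sim\sqrt\delta$, which carries positive linearized energy but zero nonlinear energy. (A related slip: off the bad set one has $\delta|\eta_\delta|=|G_\delta-\mathrm{Id}|\le\rho$, not $\le\delta\rho$, so the Taylor remainder is $\omega(\rho)|\eta_\delta|^2$, which again requires a $\rho$-uniform $L^2$ bound on $\eta_\delta$ that does not exist, as the example shows.)

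The repair is the multiplicative linearization carried out in the paper. There, assuming for contradiction that the energy stays below the target, the rigidity estimate for curl-free fields on annuli (\cite[Proposition 3.3]{SZ}; cf.\ Lemma \ref{lemma:comeinSZ}) produces \emph{constant rotations} $\bar R_\delta\in\SO$ with $\|\beta_\delta-\bar R_\delta\|_{L^2(A_{1,r}(0);\R^{2\times2})}\le C\delta$; one then sets $\nabla v_\delta:=\frac{\bar R_\delta^{\mathrm{T}}\beta_\delta-\mathrm{Id}}{\delta}$ on the annulus cut along a segment $\mathsf{C}$, and this field \emph{is} bounded in $L^2$ with no skew ambiguity, because the optimal rotation has been factored out multiplicatively and the curvature of $\SO$ then only contributes terms of order $\delta$. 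The circulation is recorded as the jump $[v_\delta]=\bar R_\delta^{\mathrm{T}}\zeta_\delta/\delta$ across $\mathsf{C}$, the truncation at level $\delta^{-\alpha}$ with $0<\alpha<1$ makes the Taylor remainder vanish (since $\delta|\nabla v_\delta|\le\delta^{1-\alpha}\to0$ on the truncation set), and weak lower semicontinuity together with the minimality defining $\psi^{\C}_{1,r}$ in \eqref{perliminfgiusta} concludes. Everything in your scheme after the bound (passing the constraints to the weak limit, lower semicontinuity, minimality) matches the paper; it is only the route to the bound that must go through rigidity rather than through Korn's inequality and an additive decomposition.
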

\begin{proof}
In order to prove \eqref{nuova} we argue by contradiction.
Assume that there exists a vanishing sequence $\{\delta_l\}_{l\in\N}\subset (0,+\infty)$ and a constant $\sigma>0$ such that 
\begin{equation}\label{nuovasba}
\frac{1}{\delta_l^2}\int_{A_{1,r}(0)}W(\beta_{\delta_l})\,\ud x< \log r\,\psi_{1,r}^{\C}(R^{\mathrm{T}}\zeta)-\sigma\,.
\end{equation}
By \eqref{nuovasba}, in view of property \eqref{ivprop} and of the Rigidity Estimate in \cite{FJM} (see \cite[Proposition 3.3]{SZ} for a specific formulation of such a rigidity result suited for curl-free fields on annuli, as well as Lemma \ref{lemma:comeinSZ} above), we get that there exist a constant $C>0$ and a sequence of constant rotations $\{\bar R_{\delta_l}\}_{l\in\N}\subset\SO$ such that
\begin{equation}\label{p10}
\|\beta_{\delta_l}-\bar R_{\delta_l}\|^2_{L^2(A_{1,r}(0);\R^{2\times 2})}\le C\log r\,\psi^\C_{1,r}(R^{\mathrm{T}}\zeta)\delta_l^2\,.
\end{equation}
Clearly, by triangular inequality,
$\bar R_{\delta_l}\to R$ as $l\to +\infty$\,.
By assumption, the field $\frac{\bar R_{\delta_l}^{\mathrm{T}}\beta_{\delta_l}-\mathrm{Id}}{\delta_l}$ 
 is curl-free, and hence, setting $\mathsf{C}:=\{(x_1;0)\,:\, 1\le x_1\le r\}$\,, we have that there exists a sequence $\{v_{\delta_l}\}_{l\in\N}\subset H^1(A_{1,r}(0)\setminus \mathsf{C};\R^2)$ such that $\int_{A_{1,r}(0)}v_{\delta_l}\,\ud x=0$ and $\frac{\bar R_{\delta_l}^{\mathrm{T}}\beta_{\delta_l}-\mathrm{Id}}{\delta_l}=\nabla v_{\delta_l}$ in $A_{1,r}(0)\setminus \mathsf{C}$\,. 
By construction, 
\begin{equation}\label{ilsalto}
[v_{\delta_l}]
=\bar R_{\delta_l}^{\mathrm{T}}\frac{\zeta_{\delta_l}}{\delta_l}\qquad\textrm{on }\mathsf{C}\,,
\end{equation}
where $[v]$ denotes the jump of the function $v$ in the sense of traces.
By \eqref{p10}, we obtain
\begin{equation}\label{p2}
\int_{A_{1,r}(0)}|\nabla v_{\delta_l}|^2\,\ud x\le C\log r\,\psi^\C_{1,r}(R^{\mathrm{T}}\zeta)\,,
\end{equation}
whence we deduce that, up to a subsequence, $ v_{\delta_l}\weakly v$ in $H^1(A_{1,r}(0)\setminus\mathsf{C};\R^2)$ for some $ v\in H^1(A_{1,r}(0)\setminus\mathsf{C};\R^2)$\,; moreover, by \eqref{ilsalto} and recalling that $\bar R_{\delta_l}\to R$\,, we get
\begin{equation}\label{ilsaltofinale}
[v]=R^{\mathrm{T}}\zeta\qquad\textrm{on }\mathsf{C}\,.
\end{equation}
We are now in a position to linearize the energy around the identity. 
To this end, we define the sequence $\{\chi_{\delta_l}\}_{l\in\N}$ of characteristic functions
\begin{equation*}
\chi_{\delta_l}(x):=\left\{
\begin{array}{ll}
1&\textrm{if }|\nabla v_{\delta_l}|<\delta_l^{-\alpha}\\
0&\textrm{otherwise in }A_{1,r}(0)\setminus\mathsf{C}\,,
\end{array}
\right.
\end{equation*}
with $0<\alpha<1$ arbitrarily fixed.
By \eqref{p2} it follows that $\chi_{\delta_l}\to 1$ in measure and hence $\nabla  v_{\delta_l}\chi_{\delta_l}\weakly \nabla v$ in $L^2(A_{1,r}(0)\setminus\mathsf{C};\R^{2\times 2})$\,.
By Taylor expansion,
using \eqref{iiprop},  we have
\begin{equation}\label{zeta0}
\begin{aligned}
\frac{1}{\delta_l^2}\int_{A_{1,r}(0)}W(\beta_{\delta_l})\,\ud x
\ge& \,\frac{1}{\delta_l^2}\int_{A_{1,r}(0)}\chi_{\delta_l}W(\bar{R}_{\delta_l}^{\mathrm{T}}\beta_{\delta_l})\,\ud x\\
=&\,\frac{1}{\delta_l^2}\int_{A_{1,r}(0)\setminus\mathsf{C}}\chi_{\delta_l}W(\mathrm{Id}+\delta_l\nabla v_{\delta_l})\,\ud x\\
\ge&\,\int_{A_{1,r}(0)\setminus\mathsf{C}}\Big(\frac 1 2 \C(\chi_{\delta_l}\nabla v_{\delta_l}):(\chi_{\delta_l} \nabla  v_{\delta_l})
-\frac{\chi_{\delta_l}}{\delta_l^2}\omega(\delta_l|\nabla v_{\delta_l}|)\Big)\,\ud x\,,
\end{aligned}
\end{equation}
with $\frac{\omega(t)}{t^2}\to 0$ as $t\to 0^+$\,. 

Notice that 
\begin{equation*}
\frac{\chi_{\delta_l}}{\delta_l^2}\omega(\delta_l|\nabla v_{\delta_l}|)=|\nabla v_{\delta_l}|^2{\chi_{\delta_l}}\frac{\omega(\delta_l|\nabla v_{\delta_l}|)}{\big(\delta_l|\nabla v_{\delta_l}|\big)^2}\,,
\end{equation*}
which is the product of a uniformly bounded sequence ($\{|\nabla v_{\delta_l}|^2\}_{l\in\N}$) in $L^1(A_{1,r}(0))$ and a uniformly vanishing sequence in $L^\infty(A_{1,r}(0))$\,.  
Therefore, by \eqref{zeta0} and by lower semicontinuity, we conclude
\begin{equation}\label{zeta3}
\liminf_{l\to +\infty} \frac{1}{\delta_l^2}\int_{A_{1,r}(0)}W(\beta_{\delta_l})\,\ud x\ge \int_{A_{1,r}(0)\setminus\mathsf{C}}\frac 1 2 \C\nabla v: \nabla v\,\ud x\ge \psi^{\C}_{1,r}(R^{\mathrm{T}}\zeta)\,,
\end{equation}
where the last inequality follows by the very definition of  $\psi^{\C}_{1,r}$ in \eqref{perliminfgiusta} and by \eqref{ilsaltofinale}.
Since \eqref{zeta3} contradicts \eqref{nuovasba}, we have that \eqref{nuova} holds true.
\end{proof}
With Lemma \ref{linearlemma} in hand, we are in a position to prove the $\Gamma$-$\liminf$ inequality in Theorem \ref{teo:Gamma-conv}.
\begin{proof}[Proof of Theorem \ref{teo:Gamma-conv}(i)]
We can assume without loss of generality that \eqref{enbound} is satisfied.
For every $\ep>0$ let $\mu_\ep=\ep\sum_{n=1}^{N_\ep}\xi^n_\ep\delta_{x^n_\ep}\in X_\ep^\gamma(\Omega)$\,.
In view of \eqref{anchedopo}, we have that $|\frac{\mu_\ep}{\ep}|(\Omega)\le C_\gamma$\,, and hence we may assume that, up to a subsequence, $N_\ep=\hat N$ for some $\hat N$ independent of $\ep$\,. Furthermore, up to passing to a further subsequence, we may assume that each of the points $x^n_\ep$ converges to some point in a finite set $\{y^j\}_{j=1,\ldots,J}\subset\bar\Omega$ (with $K\le J\le \hat N$), where $y^j=x^k\in \Om$ for $j=1,\ldots,K$\,.  
Let $0<\rho<1$ be such that the balls $B_{2\rho}(y^j)$ are pairwise disjoint and $\bigcup_{k=1}^K{B_{2\rho}(x^k)}\subset\Omega$\,. 
Then, setting
\begin{equation*}
\E^{\mathrm{far}}_\ep:=\int_{\Omega\setminus \bigcup_{j=1}^J\overline{B}_{\rho}(y^j)}W(\beta_\ep)\,\ud x\,,\quad 
\E^{k}_\ep:=\int_{\Omega_{\ep}(\mu_\ep)\cap B_{\rho}(x^k)}W(\beta_\ep)\,\ud x \textrm{ (for every $k=1,\ldots,K$)}\,,
\end{equation*}
we have that
\begin{equation}\label{somma}
\F_\ep^{\gamma}(\mu_\ep,\beta_\ep)\ge \E^{\mathrm{far}}_\ep+\sum_{k=1}^{K}\E^{k}_\ep\,.
\end{equation}
By arguing verbatim as in the proof of \cite[Proposition 3.11]{SZ}, namely, linearizing $W$ around the limit rotation $R$ and using the lower semicontinuity of the elastic energy with respect to the weak convergence in $L^2$\,, one can check that
\begin{equation}\label{farfield}
\liminf_{\ep\to 0}\frac{1}{\ep^2|\log\ep|} \E^{\mathrm{far}}_\ep\ge \int_{\Omega\setminus \bigcup_{j=1}^J\overline{B}_{\rho}(y^j)}\C R^{\mathrm{T}}\betalin:R^{\mathrm{T}}\betalin\,\ud x\,.
\end{equation}
Now we claim that, for every $k=1,\ldots,K$
\begin{equation}\label{core}
\liminf_{\ep\to 0}\frac{1}{\ep^2|\log\ep|}\E^{k}_\ep\ge \ffi^\C(R^{\mathrm{T}}\xi^k)\,.
\end{equation}
Notice that \eqref{somma}, \eqref{farfield} and \eqref{core} yield
\begin{equation*}
\liminf_{\ep\to 0}\frac{1}{\ep^2|\log\ep|}\F_\ep^{\gamma}(\beta_\ep)\ge \sum_{k=1}^K \ffi^\C(R^{\mathrm{T}}\xi^k)+ \int_{\Omega\setminus \bigcup_{j=1}^J\overline{B}_{\rho}(y^j)}\C R^{\mathrm{T}}\betalin:R^{\mathrm{T}}\betalin\,\ud x\,,
\end{equation*}
whence \eqref{liminfformula} follows by sending $\rho\to 0$\,.

Therefore, in order to conclude the proof it is enough to show that for any fixed $k=1,\ldots,K$\,, formula \eqref{core} holds true.
To this end, we fix $k\in\{1,\ldots,K\}$ and we set 
\begin{equation*}
\tilde\mu_\ep^k:=\ep\sum_{x^n_\ep\in B_\rho(x^k)}\xi^n_\ep\delta_{x^n_\ep}\,;
\end{equation*}
trivially, 
$
\frac{\tilde\mu_\ep^k}{\ep}\weakstar\xi^k\delta_{x^k}
$
and
\begin{equation}\label{dsum2}
\frac{\tilde\mu_\ep^k(B_\rho(x^k))}{\ep}\to\xi^k\qquad\textrm{as }\ep\to 0\,.
\end{equation}
Up to a subsequence, we may assume that the cardinality  of $\supp\tilde\mu_\ep^k$ is given by some $N^k\in\N$ independent of $\ep$\,.
Up to a relabeling, we can also assume that (recall that $k$ is fixed)
$\supp\tilde\mu_\ep^k=\{x^1_\ep,\ldots, x^{N^k}_\ep\}$\,.
Let (for $\ep$ small enough) $g_\ep^k:\big(\frac{|\log\rho|}{|\log\ep|},1\big)\to \N$ denote the function that associates to any $q\in \big(\frac{|\log\rho|}{|\log\ep|},1\big)$ the number $g_\ep^k(q)$ of connected components of $\bigcup_{n=1}^{N^k}B_{\ep^q}(x^n_\ep)$\,. We have that $g_\ep^k\equiv N^k$ in the interval $[\gamma,1)$ and $g^k_\ep$ is monotonically non decreasing in $\big(\frac{|\log\rho|}{|\log\ep|},1\big)$\,; hence, it can have at most $L^k_\ep\le N^k$ discontinuities.
Up to passing to a subsequence, we can assume that $L^k_\ep=L^k$ for some $L^k$ independent of $\ep$\,.
 Let $\{q_\ep^{k,l}\}_{l=1,\ldots,L^k}\subset\big(\frac{|\log\rho|}{|\log\ep|},\gamma\big)$ denote the set of discontinuity points of the   function $g_\ep^k$ with
$q_\ep^{k,l}<q_\ep^{k,l+1}$ for every $l=1,\ldots,L^k -1$\,.
There exists a finite set $\triangle:=\{q^{k,1},\ldots,q^{k,\tilde L^k}\}\subset(0,\gamma]$ with $q^{k,m}<q^{k,m+1}$ and $\tilde L^k\le L^k$ such that, up to a subsequence, $\{q_\ep^{k,l}\}_{\ep}$ converges to some point in $\triangle$\,, as $\ep\to 0$ for every $l=1,\ldots,L^k$\,. Moreover, we set $q^{k, 0}:=0$ and $q^{k,\tilde L^k+1}:=1$\,. 
Let $\lambda>0$ be such that $2\lambda<\min \{q^{k,m+1}-q^{k,m}:\, m\in\{0, 1, \dots,\tilde L^k\}\}$; then, for $\ep$ small enough, the function $g^k_\ep$ is constant in the interval $[q^{k,m}+\lambda, q^{k,m+1}-\lambda]$\,, its value being denoted by $I_\ep^{k,m}$. 

One can easily see  (in fact, the following arguing is a part of the so called ball construction as done for instance in \cite{Sa}, to which we refer the reader for further details) that for every $m=0,1,\ldots,\tilde L^k$ there exists  a family of 
$I_\ep^{k,m}$
 annuli  
 \begin{equation}\label{anelloni}
 C_\ep^{k,m, i}:=B_{\ep^{q^{k,m}+\lambda}}(z_\ep^{k,m,i})\setminus \overline{B}_{\ep^{q^{k,m+1}-\lambda}}(z_\ep^{k,m,i})
 \end{equation}
with $z_\ep^{k,m,i}\in B_{\rho}(x^k)$ and  $i=1,\ldots,I_\ep^{k,m}$\, such that the following properties hold true:  The annuli $C_\ep^{k,m,i}$ are pairwise disjoint, contained in $B_{2\rho}(x^k)$ and for all $m=0,1,\ldots,\tilde L^k$ 
$$
\bigcup_{n=1}^{N^k} B_\ep(x^n_\ep)\subset\bigcup_{i=1}^{I_\ep^{k,m}}B_{\ep^{q^{k,m+1}-\lambda}}(z_\ep^{k,m,i})\, .
$$
Setting $\xi^{k,m,i}_\ep:=\mu_\ep (B_{\ep^{q^{k,m+1}-\lambda}}(z_\ep^{k,m,i}))$\,,
in view of \eqref{dsum2}, we have that (for every $m=0,1,\ldots,\tilde L^k$)
\begin{equation}\label{dsum4}
\sum_{i=1}^{I_\ep^{k,m}} \frac{\xi_\ep^{k,m,i}}{\ep}\to\xi^k\qquad\textrm{as }\ep\to 0.
\end{equation}
Up to subsequences, we have that (for every $m=0, 1,\ldots, \tilde L^k$)
 $I^{k,m}_\ep=I^{k,m}$ is independent of $\ep$\,; we claim that
 \begin{equation}\label{dsum41}
 \frac{\xi_\ep^{k,m,i}}{\ep}\to \hat{\xi}^{k,m,i}\qquad\textrm{for every }i=1,\ldots, I^{k,m}\,,
 \end{equation}
 for some vectors 
 \begin{equation}\label{limitepesi}
 \hat\xi^{k,m,i}\in R\Ss
 \end{equation}
 with 
 \begin{equation}\label{sommapesi}
 \sum_{i=1}^{I^{k,m}} \hat{\xi}^{k,m,i}=\xi^k\,.
 \end{equation}
Indeed, up to passing to a further subsequence, by construction, 
$\xi_\ep^{k,m,i}=\ep\sum_{n=1}^{N^{k,m,i}} R_\ep^n b_\ep^n$\,, for some $N^{k,m,i}\in\N$\,, $R_\ep^n\in\SO$\,, and $b_\ep^n\in\Ss$\,. 
By \eqref{ultimacosa}, we thus have that, for every $n=1,\ldots,N^{k,m,i}$ there exists a rotation $\tilde R_{\ep}^{n}\in\mathcal{I}({\Ss})$ such that
 \begin{equation*}
 |R_\ep^n b^n_\ep-R_\ep(\tilde R^n_\ep)^{\mathrm{T}} b^n_\ep|^2\le C_{\gamma}\ep^{2(1-\gamma)}\sqrt{|\log\ep|}\,.
 \end{equation*}
 whence, using that $(\tilde R_\ep^n)^{\mathrm{T}} b^n_\ep\in \Ss$ and that $|R_\ep-R|\to 0$\,, we deduce \eqref{limitepesi};  finally, \eqref{sommapesi} follows immediately from \eqref{dsum4} and \eqref{dsum41}.

Fix $r>1$\,.
For every $m=0,1,\ldots, \tilde L^k$ we set
 $H_\ep^{k,m}:=\big\lfloor (q^{k,m+1}-q^{k,m}-2\lambda)\frac{|\log\ep|}{\log r}\big\rfloor$ and, for every $h=1,\ldots, H_\ep^{k,m}$ and for every $i=1,\ldots,  I^{k,m}$\,, we define $A_{\ep}^{k,m,i,h}:=A_{r^{h-1}\ep^{q^{k,m+1}-\lambda}, r^{h}\ep^{q^{k,m+1}-\lambda}}(z_\ep^{k,m,i})$\,.
Recalling the definition of $C_\ep^{k,m,i}$ in \eqref{anelloni} and setting
$$
\hat\beta^{k,m,i,h}_\ep(y):=\beta_\ep(r^{h-1}\ep^{q^{k,m+1}-\lambda}y+z_\ep^{k,m,i})\,,\qquad \textrm{ for every }y\in A_{1,r}(0)\,, 
$$
we get
\begin{equation}\label{p0}
\begin{aligned}
\frac{1}{\ep^2}\int_{C_\ep^{k,m,i}}W(\beta_\ep)\,\ud x\ge&\, \sum_{h=1}^{H^{k,m}_\ep}\frac{1}{\ep^2}\int_{A_{\ep}^{k,m,i,h}}W(\beta_\ep)\,\ud x\\
=&\, \sum_{h=1}^{H^{k,m}_\ep}\frac{1}{(r^{1-h}\ep^{1-q^{k,m+1}+\lambda})^2}\int_{A_{1,r}(0)}W(\hat\beta^{k,m,i,h}_\ep)\,\ud y\,,
\end{aligned}
\end{equation}
where we have used the change of variable $y=r^{1-h}\ep^{-q^{k,m+1}+\lambda}(x-z_\ep^{k,m,i})$\,.

By construction, $\hat\beta^{k,m,i,h}_\ep\in L^2(A_{1,r}(0);\R^{2\times 2})$\,, $\Cu\hat\beta^{k,m,i,h}_\ep=0$ and 
\begin{equation*}
\begin{aligned}
\zeta_\ep^{k,m,i,h}:=\int_{\partial B_1(0)}\hat\beta^{k,m,i,h}_\ep\,t\,\ud\Huno=&\,r^{1-h}\ep^{-q^{k,m+1}+\lambda}\int_{\partial B_{r^{h-1}\ep^{q^{k,m+1}-\lambda}}(z_\ep^{k,m,i})}\beta_\ep\,t\,\ud\Huno\\
=&\,r^{1-h}\ep^{-q^{k,m+1}+\lambda}\xi_\ep^{k,m,i}\,.
\end{aligned}
\end{equation*}
By \eqref{dsum41}, we get that $\frac{\zeta_\ep^{k,m,i,h}}{r^{1-h}\ep^{1-q^{k,m+1}+\lambda}}\to\hat\xi^{k,m,i}$ as $\ep\to 0$\,.
Moreover, by \eqref{stimafond}, we have that
\begin{equation*}
\begin{aligned}
\int_{A_{1,r}(0)}|\hat\beta_\ep^{k,m,i,h}-R_\ep|^2\,\ud y=&\,(r^{1-h}\ep^{-q^{k,m+1}+\lambda})^2\int_{A_\ep^{k,m,i,h}}|\beta_\ep-R_\ep|^{2}\,\ud x\\
\le&\, C_\gamma (r^{1-h}\ep^{1-q^{k,m+1}+\lambda})^2|\log\ep|\to 0\qquad\textrm{as }\ep\to 0\,,
\end{aligned}
\end{equation*}
which implies that $\hat\beta_\ep^{k,m,i,h}\to R$ in $L^2(A_{1,r}(0);\R^{2\times 2})$\,. Therefore, we can apply Lemma \ref{linearlemma} with $\delta=r^{1-h}\ep^{1-q^{k,m+1}+\lambda}$\,, thus obtaining that
\begin{equation}\label{p00}
\frac{1}{(r^{1-h}\ep^{1-q^{k,m+1}+\lambda})^2}\int_{A_{1,r}(0)}W(\hat\beta^{k,m,i,h}_\ep)\,\ud y\ge \log r\,\psi^{\C}_{1,r}\big(R^{\mathrm{T}}\hat \xi^{k,m,i}\big)-\sigma_\ep\,,
\end{equation}
for some family $\{\sigma_\ep\}_\ep$ (independent of $k,m,i$ and $h$) with $\sigma_\ep\to 0$ (as $\ep\to 0$).
By summing \eqref{p00} over $h=1,\ldots,H_\ep^{k,m}$\,,
by \eqref{p0}, we have that
\begin{equation}\label{core0}
\begin{aligned}
\frac{1}{\ep^2}\int_{C_\ep^{k,m,i}}W(\beta_\ep)\,\ud x\ge&\, H_\ep^{k,m}\big(\log r\,\psi^{\C}_{1,r}\big(R^{\mathrm{T}}\hat \xi^{k,m,i}\big)-\sigma_\ep\big)\\
\ge&\, (q^{k,m+1}-q^{k,m}-2\lambda)|\log\ep|\Big(\psi^\C_{1,r}(R^{\mathrm{T}} \hat \xi^{k,m,i})-\frac{\sigma_{\ep}}{\log r}\Big)\\
&\quad-\log r\,\psi_{1,r}^{\C}(R^{\mathrm{T}} \hat \xi^{k,m,i})+\sigma_\ep\,.
\end{aligned}
\end{equation}
Notice that by \eqref{perliminf} and by homogeneity (see \eqref{bounds}), $|\psi_{1,r}^\C(\zeta) - \psi^\C(\zeta)|\le  \omega_r |\zeta|^2$ for some modulus of continuity $\omega_r$ with $\omega_r\to 0 $ as $r\to +\infty$\,.
Now, summing \eqref{core0} first over $i=1,\ldots, I^{k,m}$ and then over $m=0,1,\ldots,\tilde L^k$\,, and using \eqref{limitepesi} and \eqref{sommapesi} together with the very definition of $\ffi^{\C}$ in \eqref{generffi}, we obtain
 \begin{equation*}
 \begin{aligned}
 \liminf_{\ep\to 0}\frac{1}{\ep^2|\log\ep|}\E_\ep^{k}\ge&\liminf_{\ep\to 0}\sum_{m=0}^{\tilde L^k}(q^{k,m+1}-q^{k,m}-2\lambda)
 \sum_{i=1}^{I^{k,m}}\psi^\C_{1,r}(R^{\mathrm T} \hat\xi^{k,m,i})\\
&\,+\liminf_{\ep\to 0}\sum_{m=0}^{\tilde L^k}(q^{k,m+1}-q^{k,m}-2\lambda)I^{k,m}\Big(-\frac{\sigma_\ep}{\log r}\Big)\\
&\,+\liminf_{\ep\to 0}\frac{1}{|\log\ep|}\sum_{m=0}^{\tilde L^k}\sum_{i=1}^{I^{k,m}}\big(-\log r\,\psi_{1,r}^{\C}(R^{\mathrm{T}} \hat \xi^{k,m,i})+\sigma_\ep\big)\\
&\ge \sum_{m=0}^{\tilde L^k}(q^{k,m+1}-q^{k,m}-2\lambda)\sum_{i=1}^{I^{k,m}}\Big(\psi^\C(R^{\mathrm T} \hat\xi^{k,m,i})-\omega_r |\hat\xi^{k,m,i}|^2\Big)\\
 \ge&\big(1-2\lambda(\tilde L^k+1)\big)\ffi^{\C}(R^{\mathrm{T}}\xi^k)-\big(1-2\lambda(\tilde L^k+1)\big)C\omega_r\,,
 \end{aligned}
 \end{equation*}
 whence \eqref{core} follows sending $r\to +\infty$ and $\lambda\to 0$\,, thus concluding the proof of (i).
\end{proof}
We conclude the proof of Theorem \ref{teo:Gamma-conv} by constructing the recovery sequence.
\begin{proof}[Proof of Theorem \ref{teo:Gamma-conv}(ii)]
By standard density arguments in $\Gamma$-convergence it suffices to prove the claim for
$\psi^{\C}(R^{\mathrm{T}}\xi^k)=\ffi^{\C}(R^{\mathrm{T}}\xi^k)$ (for every $k=1,\ldots,K$)
 and $\betalin\in L^\infty(\Omega;\R^{2\times 2})$\,. Since
$\Omega$ is simply connected and 
since $\Cu\betalin=0$\,, there exists a map $u^{\betalin}\in W^{1,\infty}(\Omega;\R^2)$
 such that $\betalin=\nabla u^{\betalin}$\,.

Setting $\cut:=\{(x_1;0)\,:\,x_1\ge 0\}$\,,
for every $\zeta\in\R^2$ let $u^{R,\zeta}\in C^\infty(\R^2\setminus\cut;\R^2)$ be such that
\begin{equation}\label{23marzo}
\nabla u^{R,\zeta}=R\beta_{\R^2}^{\zeta,\C}\,,
\end{equation}
with $\beta_{\R^2}^{\zeta,\C}$ defined in \eqref{betapiano}.
For every $k=1,\ldots,K$\,, setting $\cut^k:=x^k+\cut$\,, we define the map  $u_\ep^{R,k}\in C^\infty(\R^2\setminus\cut^k;\R^2)$
as
\begin{equation}\label{dispfon}
u_\ep^{R,k}(x):=u^{R,R^{\mathrm{T}}\xi^k}(x-x^k)+\sqrt{|\log\ep|}u^{\betalin}(x^k)\,;
\end{equation}
then $\nabla u_\ep^{R,k}(\cdot)=R\beta^{R^{\mathrm{T}}\xi^{k},\C}_{\R^2}(\cdot-x^k)$
and
\begin{equation}\label{salcos}
[u_\ep^{R,k}]=\xi^{k}\textrm{ on }\cut^k\textrm{ (locally in the sense of traces)}.
\end{equation}
We define the map $u_\ep^{\far}\in H^{1}\big(\Omega_{\ep}(\mu)\setminus\bigcup_{k=1}^{K}\cut^k;\R^{2}\big)$ as
\begin{equation}\label{dispfar}
 u_\ep^{\far}(x):=\sum_{k=1}^Ku^{R,R^{\mathrm{T}}\xi^k}(x-x^k)+\sqrt{|\log\ep|}u^{\betalin}(x)\,.
\end{equation}
Let $\phi\in C^1([1,2];[0,1])$ be such $\phi(1)=1$ and $\phi(2)=0$\,; we define the function $u_\ep\in H^{1}\big(\Omega_{\ep}(\mu)\setminus\bigcup_{k=1}^{K}\cut^k;\R^{2}\big)$ as 
\begin{equation}\label{recose}
u_\ep(x):=\left\{
\begin{array}{ll}
u_\ep^{R,k}(x)&\textrm{if }x\in A_{\ep,\ep^{\gamma}}(x^k)\textrm{ for some }k\,,\\
\phi\big(\frac{|x-x^k|}{\ep^{\gamma}}\big)u_\ep^{R,k}(x)+\big(1-\phi\big(\frac{|x-x^k|}{\ep^{\gamma}}\big)\big)u_\ep^{\far}(x)&\textrm{if }x\in A_{\ep^{\gamma},2\ep^{\gamma}}(x^k)\textrm{ for some }k\,,\\
u_\ep^{\far}(x)&\textrm{if }x\in\Omega_{2\ep^{\gamma}}(\mu)\,.
\end{array}
\right.
\end{equation}
Finally, we set $\beta_\ep:=R+\ep\nabla u_\ep$ in $\Omega_\ep(\mu)$ and $\beta_\ep\equiv 0$ elsewhere in $\Omega$\,, so that $\beta_\ep\in L^{2}(\Omega;\R^{2\times 2})$\,.
Notice that, although $u_\ep$ jumps across $\cut^k$\,, we have that $\Cu\,\beta_\ep=0$ in $\Omega_\ep(\mu)$\,,
 since, by \eqref{salcos}, the tangential derivatives of the traces agree along $\cut^k$\,.
Setting $\mu_\ep\equiv\ep\mu$ and $R_\ep\equiv R$
 for every $\ep>0$\,, we claim that $\{(\mu_\ep;\beta_\ep;R_\ep)\}_{\ep}$ is a recovery sequence.
First, in view of Remark \ref{zeromean},
 $\beta_\ep\in\AS_\ep^{\gamma}(\mu_\ep)$\,. Second, by the very definition of $R_\ep$ and $\mu_\ep$\,, properties \eqref{zeroprop} and \eqref{dueprop} are trivially satisfied. We now show that this is the case also for condition \eqref{unoprop}.
To this end, we first notice that, by \eqref{betapiano},
\begin{equation}\label{solufo}
\begin{aligned}
&\|\nabla u^{R,R^{\mathrm{T}}\xi^k}\|_{L^2(\Omega\setminus\overline{B}_\ep(0);\R^{2\times 2})}\le C\sqrt{|\log\ep|}\,,\\
&\frac{1}{\sqrt{|\log\ep|}}\|\nabla u^{R,R^{\mathrm{T}}\xi^k}\|_{L^1(\Omega\setminus\overline{B}_\ep(0);\R^{2\times 2})}\to 0\quad\textrm{for every }k=1,\ldots,K\,,
\end{aligned}
\end{equation}
which implies, in particular,
\begin{equation}\label{peranello00}
\frac{1}{\ep\sqrt{|\log\ep|}}
(\beta_\ep-R_\ep)\chi_{A_{\ep,\ep^{\gamma}}(x^k)}=\frac{\nabla u^{R,R^{\mathrm{T}}\xi^k}(\cdot-x^k)}{\sqrt{|\log\ep|}}\weakly 0\qquad\textrm{in }L^2(\Omega;\R^{2\times 2})\,.
\end{equation}
Moreover, recalling \eqref{dispfon} and \eqref{dispfar}, for every $k=1,\ldots,K$ and for every $x\in A_{\ep^{\gamma},2\ep^{\gamma}}(x^k)$\,, 
we have that
\begin{equation*}
\begin{aligned}
u_\ep(x)=&\,u^{R,R^{\mathrm{T}}\xi^k}(x-x^k)+\Big(1-\phi\Big(\frac{|x-x^k|}{\ep^\gamma}\Big)\Big)\Big(\sum_{j\neq k}u^{R,R^{\mathrm{T}}\xi^j}(x-x^j)+\sqrt{|\log\ep|}u^{\betalin}(x)\Big)\\
&\,+\phi\Big(\frac{|x-x^k|}{\ep^\gamma}\Big)\sqrt{|\log\ep|}u^{\betalin}(x^k)\,;
\end{aligned}
\end{equation*}
hence,
for $\ep$ small enough and for $x\in A_{\ep^{\gamma},2\ep^{\gamma}}(x^k)$\,, we have
\begin{equation}\label{peranello0}
\begin{aligned}
|\nabla u_\ep(x)|\le &\, |\nabla u^{R,R^{\mathrm{T}}\xi^k}(x-x^k)|\\
&\,+\Big|1-\phi\Big(\frac{|x-x^k|}{\ep^\gamma}\Big)\Big|
\Big(\sum_{j\neq k}|\nabla u^{R,R^{\mathrm{T}}\xi^j}(x-x^j)|+\sqrt{|\log\ep|}|\betalin(x)|\Big)\\
&\,+\Big|\nabla\phi\Big(\frac{|x-x^k|}{\ep^\gamma}\Big)\Big|\Big(\sum_{j\neq k}|u^{R,R^{\mathrm{T}}\xi^j}(x-x^j)|+\sqrt{|\log\ep|}|u^{\betalin}(x)-u^{\betalin}(x^k)|\Big)\\
\le&\,C\ep^{-\gamma}+C(1+\sqrt{|\log\ep|})+C\ep^{-\gamma}(1+\ep^\gamma\sqrt{|\log\ep|})
\le C\ep^{-\gamma}\,,
\end{aligned}
\end{equation}
where we have used that $|\nabla u^{R,R^{\mathrm{T}}\xi^j}(x)|\le \frac{C}{|x|}$\,, that $\betalin\in L^\infty(\Omega;\R^{2\times 2})$\,, and that $|u^{\betalin}(x)-u^{\betalin}(x^k)|\le \|\betalin\|_{L^\infty(\Omega;\R^{2\times 2})}|x-x^k|$\,.
By \eqref{peranello0} we have immediately that
\begin{equation}\label{peranello}
\begin{aligned}
\frac{1}{\ep\sqrt{|\log\ep|}}\|\beta_\ep-R_\ep\|_{L^2(A_{\ep^{\gamma},2\ep^{\gamma}}(x^k);\R^{2\times 2})}=&\,\frac{1}{\sqrt{|\log\ep|}}\|\nabla u_\ep\|_{L^2(A_{\ep^{\gamma},2\ep^{\gamma}}(x^k);\R^{2\times 2})}\\
\le&\,\frac{C}{\sqrt{|\log\ep|}}\to 0\qquad\textrm{as }\ep\to 0\,.
\end{aligned}
\end{equation}
Therefore, by \eqref{peranello00} and \eqref{peranello}, using again \eqref{solufo}, we have that
\begin{equation}\label{finale}
\begin{aligned}
\frac{\beta_\ep-R_\ep}{\ep\sqrt{|\log\ep|}}=&-\frac{R_\ep}{\ep\sqrt{|\log\ep|}}\chi_{\Omega\setminus\Omega_\ep(\mu_\ep)}
\\
&\,+\sum_{k=1}^{K}\frac{1}{\ep\sqrt{|\log\ep|}}(\beta_\ep-R_\ep)\chi_{A_{\ep,2\ep^\gamma}(x^k)}\\
&\,+\frac{1}{\sqrt{|\log\ep|}}\nabla u_\ep^{\mathrm{far}}\chi_{\Omega_{2\ep^\gamma}(\mu_\ep)}
\weakly \betalin\qquad\textrm{in }L^2(\Omega;\R^{2\times 2})\,,
\end{aligned}
\end{equation}
i.e., \eqref{unoprop}.

Now we conclude by proving that the sequence $\{(\mu_\ep;\beta_\ep)\}_{\ep}$ satisfies \eqref{limsupformula}.
Since $W\in C^0(\R^{2\times 2})$ and $\|\beta^{R^{\mathrm{T}}\xi^k,\C}_{\R^2}\|_{L^\infty(\R^2\setminus \overline{B}_{\ep}(0);\R^{2\times 2})}\le \frac{C}{\ep}$ for some $C>0$ (independent of $\ep$), for every $\lambda>1$ we have
\begin{equation}\label{dentrissimo}
\frac{1}{\ep^2|\log\ep|}\int_{A_{\ep,\lambda\ep}(x^k)}W(\beta_\ep)\,\ud x
\le \frac{C\lambda^2}{|\log\ep|}\to 0\qquad\textrm{as }\ep\to 0\,.
\end{equation}
Moreover, note that, by assumption \eqref{vprop}, there exist $\rho>0$ and $c_\rho>0$ such that 
\begin{equation}\label{ok}
W(M)\le c_\rho\di^2(M,\SO)\qquad\textrm{ for every }M\in\SO+B_\rho(0)\,.
\end{equation}
Let $\rho>0$ be such that \eqref{ok} holds true, let $\lambda>1$ be such that $\|\beta^{R^{\mathrm{T}}\xi^k,\C}_{\R^2}\|_{L^\infty(\R^2\setminus \overline{B}_{\lambda\ep}(0);\R^{2\times 2})}\le \frac{\rho}{\ep}$ and
let $\alpha\in(\gamma,1)$\,.
By the very definition of  $\beta_\ep $, using \eqref{iiprop} and \eqref{ok}, one can check that for every $k=1,\ldots,K$
\begin{multline}\label{dentrodentro}
\limsup_{\ep\to 0}\frac{1}{\ep^2|\log\ep|}\int_{A_{\lambda\ep,\ep^\alpha}(x^k)}W(\beta_\ep)\,\ud x\\
=
\limsup_{\ep\to 0}\frac{1}{\ep^2|\log\ep|}\int_{A_{\lambda\ep,\ep^\alpha}(x^k)}W(R_\ep(\mathrm{Id}+\ep\beta_{\R^2}^{R^{\mathrm{T}}\xi^k,\C}(x -x^k))\,\ud x\\
\le \limsup_{\ep\to 0}\frac{c_\rho}{|\log\ep|} \int_{A_{\lambda\ep,\ep^\alpha}(0)}\big|\beta^{R^{\mathrm{T}}\xi^k,\C}_{\R^2}\big|^2\,\ud x=C(1-\alpha)\,.
\end{multline}
Moreover, since $\|\ep\beta^{R^{\mathrm{T}}\xi^k,\C}_{\R^2}\|_{L^\infty}\le C\ep^{1-\alpha}$ in $A_{\ep^\alpha,\ep^{\gamma}}(0)$\,, by considering the Taylor expansion of $W$ around the identity, for any $k=1,\ldots,K$  we have
\begin{equation*}
\begin{aligned}
&\limsup_{\ep\to 0}\frac{1}{\ep^2|\log\ep|}\int_{A_{\ep^\alpha,\ep^{\gamma}}(x^k)}W(\beta_\ep) \,\ud x \\
=&\, 
\limsup_{\ep\to 0}\frac{1}{\ep^2|\log\ep|}\int_{A_{\ep^\alpha,\ep^{\gamma}}(x^k)}W(R_\ep(\mathrm{Id}+\ep\beta_{\R^2}^{R^{\mathrm{T}}\xi^k,\C}(x -x^k))\,\ud x
\\
=&
\limsup_{\ep\to 0}\frac{1}{|\log\ep|}\int_{A_{\ep^\alpha,\ep^{\gamma}}(0)}\frac 1 2\C\beta^{R^{\mathrm{T}}\xi^k,\C}_{\R^2}:\beta^{R^{\mathrm{T}}\xi^k,\C}_{\R^2} 
+ \frac{1}{\ep^2}\sigma(\ep\beta^{R^{\mathrm{T}}\xi^k,\C}_{\R^2})
\,\ud x\\
\end{aligned}
\end{equation*}
with $\lim_{|M|\to 0}\frac{\sigma(M)}{|M|^2}=0$\,. Therefore, in view of  \eqref{betapiano} and \eqref{servealimsup}, we get
\begin{equation}\label{dentro}
\limsup_{\ep\to 0}\frac{1}{\ep^2|\log\ep|}\int_{A_{\ep^\alpha,\ep^{\gamma}}(x^k)}W(\beta_\ep)\,\ud x\le(\alpha-\gamma)\psi^{\C}(R^{\mathrm{T}}\xi^k)\,.
\end{equation}
Furthermore, similar computations (or  arguing verbatim as in the proof of \cite[Proposition 3.12]{SZ} for all details)  show that
\begin{equation}\label{comeinSZ}
\limsup_{\ep\to 0}\frac{1}{\ep^2|\log\ep|}\int_{\Omega_{2\ep^{\gamma}}(\mu)}W(\beta_\ep)\,\ud x\le \gamma\sum_{k=1}^K\psi^{\C}(R^{\mathrm{T}}\xi^k)+\frac 1 2\int_{\Omega}\C R^{\mathrm{T}}\betalin:R^{\mathrm{T}}\betalin\,\ud x\,.
\end{equation}
Finally, by \eqref{peranello} and \eqref{ok}, for every $k=1,\ldots,K$ we have
\begin{equation*}\label{sullanello}
\frac{1}{\ep^2|\log\ep|}\int_{A_{\ep^{\gamma},2\ep^{\gamma}}(x^k)}W(\beta_\ep)\,\ud x\le \frac{c}{\ep^2|\log\ep|}\|\beta_\ep-R_\ep\|^2_{L^2(A_{\ep^{\gamma},2\ep^{\gamma}}(x^k);\R^{2\times 2})}\to 0\,,
\end{equation*}
which, combined with \eqref{dentrissimo}, \eqref{dentrodentro}, \eqref{dentro} and \eqref{comeinSZ}, implies \eqref{limsupformula} (sending $\alpha$ to $1$)\,. 
\end{proof}
\begin{remark}
\rm{
Let $0<\gamma\le \gamma'<1$\,. For every $\ep>0$\,, we can define the functional
 $\F^{\gamma,\gamma'}_\ep:\M(\R^2;\R^2)\times L^2(\Omega;\R^{2\times 2})\to [0,+\infty]$ as
\begin{equation*}
\F^{\gamma,\gamma'}_\ep(\mu,\beta):=\left\{\begin{array}{ll}
\int_{\Omega_\ep(\mu)}W(\beta)\,\ud x&\textrm{if }\mu\in X^\gamma_\ep(\Omega)\textrm{ and }\beta\in\AS^{\gamma'}_\ep(\mu)\\
+\infty&\textrm{otherwise.}
\end{array}
\right.
\end{equation*}
Clearly, $\F_\ep^{\gamma,\gamma}\equiv\F_\ep^{\gamma}$\,.
By following verbatim the proofs of Theorem \ref{teo:comp} and Theorem \ref{teo:Gamma-conv}, one can easily check that the same compactness and $\Gamma$-convergence statements hold true also for the functional $\F^{\gamma,\gamma'}_\ep$\,.

Furthermore, for every $\ep,\delta_\ep>0$ with $\delta_\ep<\ep^\gamma$ we can define the class $\AS_{\ep,\delta_\ep}(\mu)$ of admissible strains for a measure $\mu\in X_\ep^\gamma(\Omega)$ as in \eqref{adm}, with the condition $\fint_{A_{\ep,\ep^\gamma}(x^n)} \beta \, \ud x \in  R^n\mathcal I(\mathbb S)$ replaced by $\fint_{A_{\ep,\delta_\ep}(x^n)} \beta \, \ud x \in  R^n\mathcal I(\mathbb S)$\,; analogously, we can 
define the energy functional
\begin{equation*}
\F^{\gamma}_{\ep,\delta_\ep}(\mu,\beta):=\left\{\begin{array}{ll}
\int_{\Omega_\ep(\mu)}W(\beta)\,\ud x&\textrm{if }\mu\in X^\gamma_\ep(\Omega)\textrm{ and }\beta\in\AS_{\ep,\delta_\ep}(\mu)\\
+\infty&\textrm{otherwise.}
\end{array}
\right.
\end{equation*}
One can check that if $\delta_\ep\gg\ep\sqrt{|\log\ep|}$\,, the compactness and $\Gamma$-convergence results proved for the functional $\F_\ep^\gamma$ still hold true. As observed in Remark \ref{nonfunz}, if $\delta_\ep\sim\ep$ the coherence of the micro-rotations around each dislocation with the macroscopic rotation provided by linearization would fail.
}
\end{remark}
\section{The purely discrete model}
In this section we introduce  and analyze the nonlinear purely discrete model for the elastic energy induced by a family of edge dislocations.
\subsection{Description of the problem}
Here we introduce the main notation that will be used in this section.
\vskip5pt
\paragraph{\bf The reference lattice.} 
We set
$\nu:=\frac{1}{2} e_1+\frac{\sqrt 3}{2} e_2$ and $\eta:=-\frac 1 2 e_1+\frac{\sqrt 3}{2} e_2$\,.
Let $\Tl:=\Span_\Z\{e_1,\nu\}$ and set 
$$
T^+:=\conv\{0, e_1,\nu\}\quad\textrm{ and }\quad T^-:=\conv\{0, e_1,-\eta\}\,,
$$
where, for every $a,b,c\in\R^2$, the set $\conv\{a,b,c\}$ denotes the convex envelope of the points $a$, $b$, $c$, i.e., the (closed) triangle with vertices at $a$, $b$, $c$\,.
For every $\ep>0$ we denote by $\T_\ep$ the family of the triangles  $T_\ep$ of the form $i+\ep T^{\pm}$, with $i\in\ep\Tl$\,.
Moreover, we set
$$
\T_\ep(\Omega):=\{T_\ep\in\T_\ep\,:\,T_\ep\subset\Omega\}
$$ 
and 
we define
 $\insieme:=\bigcup_{T_\ep\in\T_\ep(\Omega)}T_\ep$. Furthermore, we set $\Omega_\ep^0:=\insieme\cap \ep\Tl$ and we denote by $\Omega_{\ep}^1$ the family of nearest neighbor bonds in $\insieme$, i.e., $\Omega_{\ep}^1:=\{(i,j)\in\Omega_\ep^0\times \Omega_\ep^0\,:\,|i-j|=\ep\}$. 
Trivially, $(i,j)\in\Omega_\ep^1$ if and only if $(j,i)\in\Omega_\ep^1$\,.
  
In the following we will generalize the notation introduced above to general subsets of $\R^2$ (not necessarily open). In particular, for every triangle $T_\ep\in\T_\ep$\,, we have
\begin{equation*}
(T_\ep)_\ep^1=\{(i,j)\in (T_\ep\cap \ep\Tl)\times (T_\ep\cap\ep\Tl)\,:\,i\neq j\}\,.
\end{equation*}
For every $T_\ep\in\T_\ep$ and for every
 map $V:(T_\ep)_\ep^1\to\R^2$\,, we define the {\it discrete circulation} of $V$ on the ``boundary of $T_\ep$'' as
\begin{equation*}
\ud V(T_\ep):=V(i,j)+V(j,k)+V(k,i)\,,
\end{equation*}
where $(i,j,k)$ is a triple of  counter-clockwise oriented vertices of $T_\ep$\,.
\vskip5pt
\paragraph{\bf The admissible strains and the energy functional}
For every $\ep>0$ we define the class of discrete strains as
\begin{equation*}
\sdi(\Omega):=\{\beta:\Omega_\ep^1\to\R^{2}\,:\,\beta(i,j)=-\beta(j,i)\textrm{ for any }(i,j)\in\Omega_\ep^1\}\,.
\end{equation*}
In the following, for every $\beta\in\sdi(\Omega)$\,, we set
\begin{equation*}
\mu[\beta]:=\sum_{T_\ep\in\T_\ep(\Omega)}\ud\beta(T_\ep)\delta_{x_{T_\ep}}\,,
\end{equation*}
where the point $x_{T_\ep}$ denotes the barycenter of the $\ep$-triangle $T_\ep$\,.
For any $\beta\in\sdi(\Omega)$ and for any $T_\ep\in\T_\ep(\Omega)$ with $\mu[\beta](T_\ep)=0$ we
 denote by $\tilde\beta^{T_\ep}$ the matrix in $\R^{2\times 2}$ uniquely defined by  the following property
 \begin{equation}\label{piecewisetri}
 \beta(i,j)=\tilde\beta^{T_\ep}(j-i)\qquad\textrm{for every }(i,j)\in (T_\ep)_\ep^1\,. 
 \end{equation}
 Moreover, we define the field $\tilde\beta^{\mathcal T_\ep}\in L^2(\Omega;\R^{2\times 2})$ as
\begin{equation}\label{piecewise}
\tilde\beta^{\mathcal{T}_\ep}:=\sum_{\newatop{T_\ep\in\T_\ep(\Omega)}{\mu[\beta](T_\ep)=0}}\tilde\beta^{T_\ep}\chi_{T_\ep\cap\Omega_\ep(\mu[\beta])}\,;
\end{equation}
notice that every $T_\ep\in\T_\ep(\Omega)$ with $\mu[\beta](T_\ep)\neq 0$ is contained in $B_\ep(x_{T_\ep})$\,. 

Let 
 $\psi_1, \psi_2\in C(\R;[0,+\infty))$ be such that $\psi_1^{-1}(0)=\psi_2^{-1}(0)=\{1\}$\,, $\psi_1,\psi_2\in C^2$ in a neighborhood of $1$\,,
 and $\psi_1''(1)\,,\psi_2''(1)>0$\,.  Assume moreover that there exist $a,b>0$ such that
$
\psi_1(t)\geq a t^2-b$\,, for every $t\in [0,+\infty)$\,.

We define the discrete energy functional $E_\ep:\sdi(\Omega)\to[0,+\infty)$ as
\begin{equation*}
\begin{aligned}
E_\ep(\beta):=&\sum_{(i,j)\in\Omega_\ep^1}\ep^2\psi_1\Big(\frac{|\beta(i,j)|}{\ep}\Big)\\
&+\sum_{\newatop{(i,j),(i,k)\in \Omega_\ep^1}{\langle (j-i)\wedge (k-i),e_3\rangle>0}}\ep^2\psi_2\Big(\frac{2}{\sqrt 3\ep^2}\langle\beta(i,j)\wedge\beta(i,k),e_3\rangle\Big)\,.
\end{aligned}
\end{equation*}
We will consider also localized versions of our energy functional $E_\ep$\,. More specifically, for every open and bounded set $A\subset\R^2$\,, we define $E_\ep(\cdot;A):\sdi(A)\to [0,+\infty]$ as 
\begin{equation*}
\begin{aligned}
E_\ep(\beta;A):=&\sum_{(i,j)\in A_\ep^1}\ep^2\psi_1\Big(\frac{|\beta(i,j)|}{\ep}\Big)\\
&+\sum_{\newatop{(i,j),(i,k)\in A_\ep^1}{\langle (j-i)\wedge (k-i),e_3\rangle>0}}\ep^2\psi_2\Big(\frac{2}{\sqrt 3\ep^2}\langle\beta(i,j)\wedge\beta(i,k),e_3\rangle\Big)\,,
\end{aligned}
\end{equation*}
so that $E_\ep(\beta;\Omega)=E_\ep(\beta)$\,.

Let $0<\gamma<1$ be fixed. 
We set $S:=\{e_1,\nu\}$\,, so that, following the notation in Section \ref{sec:semidi}, $\Ss=\Tl$\,.
We define
\begin{equation*}
\mdi:=\Big\{\mu\in X_\ep^\gamma(\Omega)\,:\,\supp\mu\subset\bigcup_{T_\ep\in\T_\ep(\Omega)}\{x_{T_\ep}\}\Big\}\,,
\end{equation*}
where the class $X^\gamma_\ep(\Omega)$ is defined in \eqref{mcont}.

Moreover, for any $\mu=\ep\sum_{n=1}^N  R^n b^n \delta_{x^n}\in\mdi$\,, with $N\in\N$\,, $R^n\in\SO$\,, $b^n\in\Tl$\,, and $x^n\in\Omega$\,, we define the class of admissible discrete strains associated to $\mu$ as
\begin{equation*}
\begin{aligned}
\asdi(\mu):=\Big\{\beta\in\sdi(\Omega)\,:\,
\fint_{A_{\ep,\ep^\gamma}(x^n)} \tilde\beta^{\mathcal{T}_\ep} \, \ud x \in R^n\mathcal I(\Tl), \quad
\mu[\beta]= \mu 
\Big\}\,,
\end{aligned}
\end{equation*}
where $\tilde \beta^{\mathcal{T}_\ep}$ is the map defined in \eqref{piecewise} and $\mathcal{I}(\Tl)$ is the group of rotations generated by the $\frac\pi 3$ clockwise rotation $R(\frac\pi 3)$\,. 

 For every $0<\ep<1$
we define the discrete energy $\E_\ep^\gamma:\M(\R^2;\R^2)\times\sdi(\Omega)\to [0,+\infty]$ as
\begin{equation*}
\E^{\gamma}_\ep(\mu,\beta):=\left\{\begin{array}{ll}
E_\ep(\beta)&\textrm{if }\mu\in \mdi\textrm{ and }\beta\in \asdi(\mu)\,,\\
+\infty&\textrm{otherwise.}
\end{array}\right.
\end{equation*}
Before stating and prove our $\Gamma$-convergence result for the functional $\E^{\gamma}_\ep$\,, we first derive the continuous nonlinear elastic energy density associated
to the discrete functional $E_\ep$ as well as the corresponding linearized elasticity tensor.
{
\begin{remark}\label{interpred}
\rm{
Let $\beta\in\sdi(\Omega)$\,; 
for every  $T_\ep\in\T_\ep(\Omega)$ we define
\begin{equation*}
\begin{aligned}
\tilde E_\ep(\beta;T_\ep):=\frac 12&\sum_{(i,j)\in (T_\ep)_\ep^1}\ep^2\psi_1\Big(\frac{|\beta(i,j)|}{\ep}\Big)\\
&\,+ \sum_{\newatop{(i,j),(i,k)\in (T_\ep)_\ep^1}{\langle (j-i)\wedge (k-i),e_3\rangle>0}}\ep^2\psi_2\Big(\frac{2}{\sqrt 3\ep^2}\langle\beta(i,j)\wedge\beta(i,k),e_3\rangle\Big)\,.
\end{aligned}
\end{equation*}
Note that, for every $A\subset\Omega$\,,
\begin{equation}\label{energiariscritta0}
E_\ep(\beta;A)=\sum_{\newatop{T_\ep\in\T_\ep}{T_\ep\subset A_{\T_\ep}}}\tilde{E}_\ep(\beta;T_\ep)+\frac{1}{2}\sum_{\newatop{(i,j)\in A_\ep^1}{i,j\in\partial A_{\mathcal{T}_\ep}}}\ep^2\psi_1\Big(\frac{|\beta(i,j)|}{\ep}\Big)\,.
\end{equation}}
Moreover, for any $T_\ep\in\T_\ep(\Omega)$ with $\mu[\beta](T_\ep)=0$\,, we easily get that
\begin{equation}\label{energia_tri}
\begin{aligned}
\tilde E_\ep(\beta;T_\ep)=&\, \ep^2\Big[\frac{1}{2}\Big(\psi_1(|\tilde\beta^{T_\ep}e_1|)+\psi_1(|\tilde \beta^{T_\ep}\nu|)+\psi_1(|\tilde\beta^{T_\ep}\eta|)\Big)
+3\, \psi_2(\det\tilde\beta^{T_\ep})\Big]\\
=&\,\frac{\sqrt 3}{4}\ep^2 W(\tilde\beta^{T_\ep})=\int_{T_\ep}W(\tilde\beta^{T_\ep})\,\ud x\,,
\end{aligned}
\end{equation}
where $\tilde\beta^{T_\ep}$ is the matrix defined in \eqref{piecewisetri} and $W$ is given by
\begin{equation}\label{defW}
W(M):=\frac{4}{\sqrt{3}}\Big[\frac{1}{2}\Big(\psi_1(| M e_1|)+\psi_1(|M\nu|)+\psi_1(| M \eta|)\Big)+3\, \psi_2(\det M)\Big]\,.\\
\end{equation}
The assumptions on $\psi_1,\psi_2$ easily yield  that the map $W:\R^{2\times 2}\to \R$ satisfies the assumptions \eqref{iprop}-\eqref{vprop}.  Setting $\alpha_1:=\psi_1''(1)$ and $\alpha_2:=\psi_2''(1)$\,,  the corresponding tensor $\C$ in \eqref{tlin} is given by
\begin{equation}\label{tuttilame}
\C \delta:\delta:=\frac{4}{\sqrt{3}}\Big[\frac{1}{2}\alpha_1\Big(|e_1^* \delta e_1|^2 +|\nu^* \delta \nu|^2 +|\eta^* \delta \eta|^2 \Big)+3\alpha_2|\mathrm{tr}\, \delta|^2\Big]\,,\qquad\textrm{ for every }\delta\in\R^{2\times 2}\,.
\end{equation}
A straightforward computation shows that
\begin{equation*}
\begin{aligned}
\C \delta:\delta=&\frac{4}{\sqrt{3}}\Big[\frac{1}{2}\alpha_1\Big(\frac 38 |\mathrm{tr}\, \delta|^2+\frac34 |\delta^{\sym}|^2 \Big)+3\alpha_2|\mathrm{tr}\, \delta|^2\Big]\\
=& \Big(\frac{\sqrt3}{4}\alpha_1+4\sqrt{3}\alpha_2\Big)|\mathrm{tr}\, \delta|^2+\frac{\sqrt3}{2}\alpha_1|\delta^{\sym}|^2 .
\end{aligned}
\end{equation*}
In particular, $\C$ is isotropic with Lam\'e moduli $\lambda=\frac{\sqrt3}{4}\alpha_1+4\sqrt{3}\alpha_2$ and $\mu=\frac{\sqrt3}{4}\alpha_1$\,.
In such a case, the function $\psi^{\C}$ defined in \eqref{servealimsup} is given (see for instance \cite{CL})  by
\begin{equation*}
\psi^{\C}(b)=\frac{1}{4\pi}\frac{\mu(\lambda+\mu)}{\lambda+2\mu}|b|^2
=:C(\alpha_1,\alpha_2)|b|^2\,,
\end{equation*}
and hence, recalling the definition of $\ffi^\C$ in \eqref{generffi}, we have 
\begin{equation}\label{ffiora}
\ffi^{\C}(b)=
C(\alpha_1,\alpha_2)\min\bigg\{\sum_{i=1}^{3}|z_i|\,:\, z_1,z_2,z_3\in\Z\,,\, b=z_1e_1+z_2\nu+z_3\eta\bigg\}\,.
\end{equation}
\end{remark}
By \eqref{energia_tri}, \eqref{piecewise} and \eqref{energiariscritta0}, for each $\mu\in\mdi$ and $\beta\in\sdi(\Omega)$ with $\mu[\beta]=\mu$, and for every open set $A\subset\subset\Omega$\,, for $\ep$ sufficiently small it holds
\begin{equation}\label{energiariscritta}
E_\ep(\beta)\ge\int_{A_\ep(\mu)}W(\tilde\beta^{\mathcal{T}_\ep})\,\ud x\,.
\end{equation}
By a reflection argument (see, for instance, \cite[Lemma 4.3]{ADLPP}), one can prove the following result.
\begin{lemma}\label{ext}
Let $\ep>0$\,, $\mu\in\mdi$ and $\beta\in\asdi(\mu)$ and let $\tilde\beta^{\mathcal{T}_\ep}$ be the map defined in \eqref{piecewise}. Then, there exists a field $\hat\beta^{\mathcal{T}_\ep}\in L^2(\Omega;\R^{2\times 2})$ 
such that
\begin{itemize}
\item[(i)] $\hat\beta^{\mathcal{T}_\ep}=\tilde\beta^{\mathcal{T}_\ep}$ in $\insieme$\,;
\item[(ii)] $\Cu\hat\beta^{\mathcal{T}_\ep}=0$ in $\Omega_\ep(\mu)$ (in the sense of distributions);
\item[(iii)] $\int_{\Omega_\ep(\mu)}W(\hat\beta^{\mathcal{T}_\ep})\,\ud x\le C E_\ep(\beta)$\,, for some constant $C$ independent of $\ep$\,.
\end{itemize}
\end{lemma}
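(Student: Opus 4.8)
The plan is to split the construction into an interior part, where the field $\tilde\beta^{\mathcal T_\ep}$ is \emph{already} the restriction of a curl-free field, and the boundary layer $\Omega\setminus\insieme$, where the field must be produced by extension. First I would observe that on any two neighbouring triangles $T_\ep,T_\ep'$ with $\mu[\beta](T_\ep)=\mu[\beta](T_\ep')=0$ sharing a bond $(i,j)$, the defining relation \eqref{piecewisetri} forces $\tilde\beta^{T_\ep}(j-i)=\beta(i,j)=\tilde\beta^{T_\ep'}(j-i)$, i.e. the piecewise constant field \eqref{piecewise} has matching tangential traces across every interior edge. For a piecewise constant matrix field this is exactly the vanishing of the distributional $\Cu$, so $\Cu\tilde\beta^{\mathcal T_\ep}=0$ on $\insieme\cap\Omega_\ep(\mu)$; since every $T_\ep$ with nonzero circulation is buried inside a removed core $B_\ep(x_{T_\ep})$, no singular curl survives in $\Omega_\ep(\mu)$. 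On $\insieme$ I therefore just set $\hat\beta^{\mathcal T_\ep}:=\tilde\beta^{\mathcal T_\ep}$, which gives (i)--(ii) there and, by \eqref{energia_tri}--\eqref{energiariscritta}, carries energy already bounded by $E_\ep(\beta)$.

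It remains to extend across the collar $\Omega\setminus\insieme$, which has width $O(\ep)$ and contains no dislocations (these lie at distance $\ge 2\ep^\gamma$ from $\partial\Omega$). Here I would use the reflection construction of \cite[Lemma 4.3]{ADLPP}: since $\partial\Omega$ is Lipschitz, the polygonal boundary $\partial\insieme$ can be flattened piece by piece and the field reflected across it, furnishing a \emph{bounded} extension operator for curl-free $L^2$ matrix fields from $\insieme$ into the collar, whose output is curl-free, matches the interior trace on $\partial\insieme$, and has collar $L^2$ norm controlled by that of the input on a slightly larger interior strip. Applied together with the interior field, this yields a globally curl-free $\hat\beta^{\mathcal T_\ep}$ on $\Omega_\ep(\mu)$, hence (i) and (ii).

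The delicate point, and the main obstacle, is the energy bound (iii): the reflection used in the linear model of \cite{ADLPP} reverses orientation, while $W$ is \emph{not} reflection invariant — the term $\psi_2(\det M)$ in \eqref{defW} penalizes $\det M\approx-1$, so reflecting a nearly-rotation field directly would create an $O(\ep)$ boundary contribution, far larger than the admissible $C E_\ep(\beta)$. To avoid this I would apply the extension operator not to $\tilde\beta^{\mathcal T_\ep}$ but to its fluctuation around a fixed rotation, for which orientation is immaterial since the energy of the fluctuation is measured in $L^2$. On the collar (a Lipschitz set of bounded aspect ratio with no holes) Lemma \ref{lemma:comeinSZ} gives a single $\bar R\in\SO$ with $\|\tilde\beta^{\mathcal T_\ep}-\bar R\|_{L^2}^2\le C\|\di(\tilde\beta^{\mathcal T_\ep},\SO)\|_{L^2}^2$, and by the coercivity \eqref{ivprop} together with \eqref{energiariscritta} the right-hand side is bounded by $C E_\ep(\beta)$. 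I would then extend the curl-free field $\bar R^{\mathrm T}\tilde\beta^{\mathcal T_\ep}-\mathrm{Id}$ to a field $G$ on the collar with $\|G\|_{L^2}\le C\|\bar R^{\mathrm T}\tilde\beta^{\mathcal T_\ep}-\mathrm{Id}\|_{L^2}$ and matching trace, and set $\hat\beta^{\mathcal T_\ep}:=\bar R(\mathrm{Id}+G)$ there. By construction $\hat\beta^{\mathcal T_\ep}$ stays in the orientation-preserving component, so $\det\hat\beta^{\mathcal T_\ep}\approx1$, and by frame indifference \eqref{iiprop} one has $W(\hat\beta^{\mathcal T_\ep})=W(\mathrm{Id}+G)$.

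To convert this into (iii) I would invoke the quadratic bound \eqref{ok}, valid in a $\rho$-neighbourhood of $\SO$: where $|G|\le\rho$ one gets $W(\mathrm{Id}+G)\le c_\rho|G|^2$, and the small-measure set $\{|G|>\rho\}$ is handled by a standard Lipschitz truncation of the extended potential, which alters $G$ only there while keeping $|G|\lesssim\rho$ and preserving the matching trace (hence not spoiling (ii)). Integrating, $\int_{\Omega\setminus\insieme}W(\hat\beta^{\mathcal T_\ep})\le c_\rho\|G\|_{L^2}^2\le C E_\ep(\beta)$, which combined with the interior estimate gives (iii). I expect the only genuinely technical points to be the bookkeeping of the Lipschitz truncation near $\partial\insieme$ and the verification that reflecting the fluctuation, rather than the full field, leaves the interior trace matching intact.
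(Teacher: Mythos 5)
Your interior step is correct, and it is the part that makes the statement plausible at all: across an edge shared by two zero-circulation triangles the identity \eqref{piecewisetri} forces equal tangential components, so the piecewise constant field \eqref{piecewise} is curl-free in the interior of $\insieme\cap\Omega_\ep(\mu)$, while \eqref{energia_tri} and \eqref{energiariscritta0} give the interior energy bound. Your observation that the reflection of \cite[Lemma 4.3]{ADLPP} cannot be imported verbatim is also correct, and it is precisely the point that the paper's one-line justification (``by a reflection argument'') glosses over: a trace-matching reflection reverses orientation, and for $\beta$ equal to a discrete rigid rotation (so $E_\ep(\beta)=0$) the reflected field would have determinant close to $-1$ on a collar of area of order $\ep$, making (iii) fail; so some version of ``reflect the fluctuation around a rotation'' is indeed forced in the nonlinear setting. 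In this sense your route is the paper's route, corrected where it needs correcting.

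But the completion has a genuine gap, and a secondary misstep. The misstep: you invoke Lemma \ref{lemma:comeinSZ} ``on the collar'', where $\tilde\beta^{\mathcal{T}_\ep}$ is not defined (it vanishes on $\Omega\setminus\insieme$ by \eqref{piecewise}), and on a strip of width $O(\ep)$ inside $\partial\insieme$ the rigidity constant of \cite{FJM} is not uniform in $\ep$ (it degenerates with the aspect ratio); the rotation $\bar R$ must come from rigidity on all of $\insieme\cap\Omega_\ep(\mu)$, which is what Lemma \ref{lemma:comeinSZ} provides. That is repairable. The real gap is the truncation step. Conditions (i)--(ii) pin down the tangential trace of $\hat\beta^{\mathcal{T}_\ep}$ on $\partial\insieme$ (which lies inside $\Omega_\ep(\mu)$): it must equal the trace of $\tilde\beta^{\mathcal{T}_\ep}$ from inside. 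Nothing prevents an admissible $\beta$ --- even one satisfying \eqref{enbounddiscr} --- from having a bond adjacent to $\partial\insieme$ with fluctuation much larger than $\rho$ (displace a single boundary lattice point: with quadratic $\psi_1$ this costs only $O(\ep^2|\log\ep|)$ while the relative strain is $\sqrt{|\log\ep|}$), and then the prescribed tangential trace of $G$ on the corresponding boundary edge itself exceeds $\rho$. Your claim that a Lipschitz truncation ``alters $G$ only on $\{|G|>\rho\}$ while keeping $|G|\lesssim\rho$ and preserving the matching trace'' is self-contradictory in this situation: any field matching that trace is forced to exceed $\rho$ near the edge, so either the truncation changes the trace --- creating a curl measure on $\partial\insieme\cap\Omega_\ep(\mu)$ and violating (ii) --- or the set $\{|G^{\mathrm{ext}}|>\rho\}$ survives. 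And on that set you have no estimate: \eqref{ok} only holds near $\SO$, and the hypotheses on $\psi_1,\psi_2$ (hence on $W$ via \eqref{defW}) include coercivity \eqref{ivprop} and smoothness \eqref{vprop} but no upper growth bound whatsoever, so $W(\mathrm{Id}+G^{\mathrm{ext}})$ is not controlled by $|G^{\mathrm{ext}}|^2$ there. Consequently (iii) is not established by your argument; the boundary triangles with large strain require a device that compares the local energy of the extension to the local discrete energy $\tilde E_\ep(\beta;T_\ep)$ of the adjacent triangles, rather than to the squared $L^2$ fluctuation, and this is the nontrivial content hidden behind the paper's citation.
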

\subsection{The main result for the discrete energy}
We are now in a position to state and prove our $\Gamma$-convergence result for the functionals $\E_\ep^{\gamma}$ as $\ep\to 0$ in the $|\log\ep|$ regime.
\begin{theorem}\label{gammaconvdiscr}
Let $\C$ be the elasticity tensor defined by \eqref{tuttilame} and let $\ffi^\C$ be the function defined in \eqref{ffiora}.
The following $\Gamma$-convergence result holds true.
\begin{itemize}
\item[(i)]
Let $\{(\mu_\ep;\beta_\ep)\}_\ep$\,,
with $\mu_\ep\in\M(\R^2;\R^2)$ and $\beta_\ep\in\sdi(\Omega)$ (for every $\ep>0$), satisfy  
\begin{equation}\label{enbounddiscr}
\sup_{\ep>0}\E^{\gamma}_\ep(\mu_\ep,\beta_\ep)\le C\ep^2|\log\ep|\,,
\end{equation}
for some constant $C>0$\,.
Then, there exist a sequence of rotations $\{R_\ep\}_\ep\subset\SO$\,, a rotation $R\in\SO$\,, a measure $\mu\in \overline X^R(\Omega)$\,, and a field $\betalin\in L^2(\Omega;\R^{2\times 2})$ with $\Cu\betalin=0$\,,
 such that, up to a subsequence, 
$(\mu_\ep;\tilde\beta^{\mathcal{T}_\ep}_\ep;R_\ep)\to (\mu;\betalin;R)$ as $\ep\to 0$ in the sense of Definition \ref{conve}.
\item[(ii)] ($\Gamma$-liminf inequality) Let $R\in\SO$\,, $\mu=\sum_{k=1}^K\xi^k\delta_{x^k}\in \overline{X}^R(\Omega)$\,, $\betalin\in L^2(\Omega;\R^{2\times 2})$ with $\Cu\betalin=0$\,. For every $\{(\mu_\ep;\beta_\ep;R_\ep)\}_\ep$\,, with 
$\mu_\ep\in\mdi$\,, $\beta_\ep\in\sdi(\Omega)$ and $R_\ep\in\SO$ (for every $\ep>0$), satisfying
$(\mu_\ep;\tilde\beta^{\mathcal{T}_\ep}_\ep;R_\ep)\to (\mu;\betalin;R)$ as $\ep\to 0$\,, 
it holds
\begin{equation}\label{liminfformuladiscr}
\sum_{k=1}^K\ffi^{\C}(R^{\mathrm{T}}\xi^k)+\int_{\Omega}\frac 1 2\C R^{\mathrm{T}}\betalin:R^{\mathrm{T}}\betalin\,\ud x\le\liminf_{\ep\to 0}\frac{1}{\ep^2|\log\ep|}\E_\ep^{\gamma}(\mu_\ep,\beta_\ep)\,.
\end{equation}
\item[(iii)] ($\Gamma$-limsup inequality) Let $R\in\SO$\,, $\mu=\sum_{k=1}^K\xi^k\delta_{x^k}\in \overline{X}^R(\Omega)$ and $\betalin\in L^2(\Omega;\R^{2\times 2})$ with $\Cu\betalin=0$\,.
 Then, there exists $\{(\mu_\ep;\beta_\ep;R_\ep)\}_\ep$\,, with 
$\mu_\ep\in\mdi$\,, $\beta_\ep\in\sdi(\Omega)$ and $R_\ep\in\SO$ (for every $\ep>0$), 
 such that $(\mu_\ep;\tilde\beta^{\mathcal{T}_\ep}_\ep;R_\ep)\to (\mu;\betalin;R)$ as $\ep\to 0$ 
 and
\begin{equation}\label{limsupformuladiscr}
\sum_{k=1}^K\ffi^{\C}(R^{\mathrm{T}}\xi^k)+\int_{\Omega}\frac 1 2\C R^{\mathrm{T}}\betalin:R^{\mathrm{T}}\betalin\,\ud x\ge\limsup_{\ep\to 0}\frac{1}{\ep^2|\log\ep|}\E_\ep^{\gamma}(\mu_\ep,\beta_\ep)\,.
\end{equation}
\end{itemize}
\end{theorem}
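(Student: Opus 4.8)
The plan is to reduce all three statements to the semi-discrete results of Section \ref{sec:semidi} through the piecewise-affine reconstruction $\tilde\beta^{\mathcal{T}_\ep}$ of \eqref{piecewise} and its curl-free extension from Lemma \ref{ext}. The conceptual engine is the exact identity \eqref{energia_tri}: on every triangle $T_\ep$ with trivial discrete circulation the discrete energy $\tilde E_\ep(\beta;T_\ep)$ equals $\int_{T_\ep}W(\tilde\beta^{T_\ep})\,\ud x$, with the density $W$ of \eqref{defW} which, by Remark \ref{interpred}, satisfies \eqref{iprop}--\eqref{vprop} and has linearization $\C$. Thus, away from the cores the discrete model is literally a sampling of the semi-discrete model, and the only genuinely discrete work lies in the singular triangles and in the recovery construction.

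For the compactness statement (i), I would apply Lemma \ref{ext} to obtain $\hat\beta^{\mathcal{T}_\ep}$ coinciding with $\tilde\beta^{\mathcal{T}_\ep}_\ep$ on $\insieme$, curl-free in $\Omega_\ep(\mu_\ep)$, and with $\int_{\Omega_\ep(\mu_\ep)}W(\hat\beta^{\mathcal{T}_\ep})\le C E_\ep(\beta_\ep)\le C\ep^2|\log\ep|$. The discrete circulation $\mu[\beta_\ep]=\mu_\ep$ translates into $\int_{\partial B_\ep(x^n_\ep)}\hat\beta^{\mathcal{T}_\ep}t\,\ud\Huno=\ep R^n b^n$, and the average condition defining $\asdi(\mu_\ep)$ is, up to a vanishing error that by Remark \ref{nonfunz} is harmless, exactly the one required by $\AS_\ep^\gamma(\mu_\ep)$. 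Hence $\hat\beta^{\mathcal{T}_\ep}$ is an admissible competitor for $\F_\ep^\gamma$ (the extra constant $C$ in the energy bound being irrelevant for compactness), and Theorem \ref{teo:comp} applies verbatim, yielding $R_\ep\to R$, $\mu_\ep/\ep\weakstar\mu\in\overline X^R(\Omega)$ and $\frac{\hat\beta^{\mathcal{T}_\ep}-R_\ep}{\ep\sqrt{|\log\ep|}}\weakly\betalin$. Since $\hat\beta^{\mathcal{T}_\ep}=\tilde\beta^{\mathcal{T}_\ep}_\ep$ on $\insieme$, which exhausts $\Omega$ as $\ep\to 0$, the same convergences hold for $\tilde\beta^{\mathcal{T}_\ep}_\ep$, giving $(\mu_\ep;\tilde\beta^{\mathcal{T}_\ep}_\ep;R_\ep)\to(\mu;\betalin;R)$ in the sense of Definition \ref{conve}.

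For the $\Gamma$-liminf (ii), I would re-run the proof of Theorem \ref{teo:Gamma-conv}(i) on $\hat\beta^{\mathcal{T}_\ep}$, crucially replacing the lossy bound of Lemma \ref{ext}(iii) by the loss-free lower bound \eqref{energiariscritta} wherever a constant-one estimate is needed. Splitting the energy as in \eqref{somma}: on the far field one uses \eqref{energiariscritta} on sets $A\subset\subset\Omega$ avoiding the singularities, together with linearization around $R$ and $L^2$-lower semicontinuity, exactly as in \eqref{farfield}; on the annuli $C_\ep^{k,m,i}$ of the ball construction, which consist of triangles of trivial circulation, \eqref{energiariscritta} gives $E_\ep(\beta_\ep;C_\ep^{k,m,i})\ge\int_{C_\ep^{k,m,i}}W(\tilde\beta^{\mathcal{T}_\ep})\,\ud x$ with constant one, and there $\tilde\beta^{\mathcal{T}_\ep}=\hat\beta^{\mathcal{T}_\ep}$ is curl-free, so Lemma \ref{linearlemma} and the homogenization over dyadic annuli reproduce the self-energy $\ffi^\C(R^{\mathrm{T}}\xi^k)$ exactly as in the derivation of \eqref{core}. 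Summing the two contributions yields \eqref{liminfformuladiscr}.

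The substantive new work is the recovery sequence in (iii). I would start from the semi-discrete recovery field $\beta_\ep^{\mathrm{sd}}=R+\ep\nabla u_\ep$ of Theorem \ref{teo:Gamma-conv}(ii), with $u_\ep$ the cut-regularized deformation \eqref{recose}, and define a discrete strain by nodal sampling, $\beta_\ep(i,j):=\varphi_\ep(j)-\varphi_\ep(i)$ with $\varphi_\ep(i):=Ri+\ep u_\ep(i)$, choosing for each $x^k$ the singular triangle $T_\ep^k$ whose barycenter is the dislocation point $x^k_\ep$ and placing the cut $\cut^k$ so that $\ud\beta_\ep(T_\ep^k)=\ep\xi^k=\ep R(R^{\mathrm{T}}\xi^k)$ with $R^{\mathrm{T}}\xi^k\in\Tl$, while $\ud\beta_\ep(T_\ep)=0$ on every other triangle, so that $\mu[\beta_\ep]=\mu_\ep$. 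The well-separation constraint $|x^{n_1}-x^{n_2}|\ge 4\ep^\gamma$ keeps the singular triangles isolated and secures the average condition, so $\beta_\ep\in\asdi(\mu_\ep)$. The energy is then transferred via \eqref{energia_tri}: on every non-singular triangle $\tilde E_\ep(\beta_\ep;T_\ep)=\int_{T_\ep}W(\tilde\beta^{T_\ep})\,\ud x$ and $\tilde\beta^{T_\ep}$ differs from $\beta_\ep^{\mathrm{sd}}$ only by the interpolation error, negligible because $\beta_\ep^{\mathrm{sd}}$ is essentially affine on the $\ep$-scale triangles, while the singular triangles sit inside $B_\ep(x^k_\ep)$ and contribute nothing to $\Omega_\ep(\mu_\ep)$. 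Consequently $E_\ep(\beta_\ep)$ has the same $\limsup$ as the semi-discrete energy of $\beta_\ep^{\mathrm{sd}}$, bounded by \eqref{limsupformula}, yielding \eqref{limsupformuladiscr}. The main obstacle is exactly this last step: reproducing the quantized circulations on the singular triangles while keeping the discretization error on the good triangles of lower order than $\ep^2|\log\ep|$, which requires a careful comparison between the interpolant $\tilde\beta^{\mathcal{T}_\ep}$ and the explicit far-field strain $\beta_{\R^2}^{R^{\mathrm{T}}\xi^k,\C}$ uniformly across the dyadic scales from $\ep$ to $\ep^\gamma$.
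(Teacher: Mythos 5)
Your treatment of parts (i) and (ii) is correct and coincides in substance with the paper's: compactness follows by feeding the extension $\hat\beta^{\mathcal{T}_\ep}$ of Lemma \ref{ext} into Theorem \ref{teo:comp} (note that the average condition transfers \emph{exactly}, since the annuli $A_{\ep,\ep^\gamma}(x^n_\ep)$ lie in $\insieme$ for small $\ep$; no appeal to Remark \ref{nonfunz} is needed), and the liminf inequality follows from the constant-one bound \eqref{energiariscritta} on sets $A\subset\subset\Omega$ containing the limit singularities. The paper simply applies Theorem \ref{teo:Gamma-conv}(i) with $\Omega$ replaced by $A$ and then lets $A$ invade $\Omega$, instead of re-running the semi-discrete proof as you propose, but this is the same argument.

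The genuine gap is in part (iii). The field you define, $\beta_\ep(i,j):=\varphi_\ep(j)-\varphi_\ep(i)$ with $\varphi_\ep(i)=Ri+\ep u_\ep(i)$ a single-valued function on the nodes, is an \emph{exact} discrete gradient, and therefore its discrete circulation vanishes on every triangle: $\ud\beta_\ep(T_\ep)=\varphi_\ep(j)-\varphi_\ep(i)+\varphi_\ep(k)-\varphi_\ep(j)+\varphi_\ep(i)-\varphi_\ep(k)=0$ identically, no matter where the cuts $\cut^k$ are placed. Hence $\mu[\beta_\ep]=0\neq\mu_\ep$, so $\beta_\ep\notin\asdi(\mu_\ep)$ and $\E^\gamma_\ep(\mu_\ep,\beta_\ep)=+\infty$: the claim that a suitable placement of the cut yields $\ud\beta_\ep(T^k_\ep)=\ep\xi^k$ is impossible for nodal sampling of a single-valued map. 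Moreover, the jump of $u_\ep$ does not disappear; it reappears as spurious strain on the bonds crossing the cut, where $\beta_\ep(i,j)\approx R(j-i)\mp\ep\xi^k+\ep\nabla u_\ep\,(j-i)$. Each such bond carries energy of order $\ep^2$ (since $\psi_1$, $\psi_2$ are evaluated at arguments bounded away from their wells), and there are of order $\ep^{-1}$ of them along each cut, so this contribution alone is of order $\ep\gg\ep^2|\log\ep|$, and the rescaled energy would diverge even if admissibility were not an issue.

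The missing ingredient --- and the genuinely discrete idea in the paper's recovery construction --- is the \emph{slip variable} $\sigma_\ep=\sum_{k}\sigma^{k,\ep}$, equal to $\pm\xi^k$ on the bonds crossing $\cut^{k,\ep}$ (with sign depending on the orientation of the crossing) and zero elsewhere, with the recovery strain defined as $\beta_\ep(i,j):=R(j-i)+\ep\bigl(u_\ep(j)-u_\ep(i)-\sigma_\ep(i,j)\bigr)$. Subtracting $\sigma_\ep$ cancels the jump of $u_\ep$ on every crossed bond, which repairs both defects at once: the bond strains along the cuts become genuinely elastic (of size $\ep|\nabla u_\ep|$ relative to $R(j-i)$), and the circulation of $\beta_\ep$ vanishes on every triangle crossed transversally by a cut (the two slip contributions cancel) while on the triangle where the cut terminates exactly one crossed bond is left uncompensated, giving $\ud\beta_\ep(T^k_\ep)=\ep\xi^k$, i.e. $\mu[\beta_\ep]=\mu_\ep$. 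Once this plastic correction is in place, the comparison you outline between $\tilde\beta^{\mathcal{T}_\ep}_\ep$ and $R+\ep\nabla u_\ep$ --- interpolation error of order $\ep^2\|\nabla^2 u_\ep\|^2_{L^\infty}$ on the good triangles, as in \eqref{claimfond}, together with the estimates across the dyadic scales from $\ep$ to $\ep^\gamma$ --- does go through and yields \eqref{limsupformuladiscr}.
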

\begin{proof}
We start by proving the compactness property (i).
In view of \eqref{enbounddiscr}, the maps $\hat\beta^{\mathcal{T}_\ep}_\ep$ provided by Lemma \ref{ext} belong to $\AS^{\gamma}_\ep(\mu_\ep)$ and satisfy
\begin{equation*}
\F_\ep^{\gamma}(\mu_\ep,\hat\beta^{\mathcal{T}_\ep}_\ep)\le C\ep^2|\log\ep|\,,
\end{equation*} 
where the functional $\F_\ep^{\gamma}$ is defined in \eqref{enercont} for the choice of $W$ in \eqref{defW}.
Therefore, the conclusion follows by  Theorem \ref{teo:comp} and Lemma \ref{ext}(i), once noticed that $\|R_\ep\|_{L^2(\Omega\setminus\insieme)}\le C\ep$\,.
\vskip5pt
Now we pass to the proof of (ii). Without loss of generality we may assume that \eqref{enbounddiscr} is satisfied. 
In view of \eqref{dueprop} we have that $|\frac{\mu_\ep}{\ep}|(\Omega)\le C$ for some $C$ independent of $\ep$\,, so that,
 up to passing to a subsequence, $|\frac{\mu_\ep}{\ep}|\weakstar\bar\mu$ for some measure $\bar\mu\in\M(\R^2;\R^2)$ with finite support contained in $\Omega$\,. 
Let $A\subset\subset\Omega$ be an open set with Lipschitz continuous boundary such that $\supp\bar\mu\subset A$\,.
 By construction,
 for sufficiently small $\ep>0$ we have that
  $\mu_\ep\res A\in X_\ep^{\gamma}(A)$ and $\tilde\beta^{\mathcal{T}_\ep}_\ep\res A\in \AS_\ep^{\gamma}(A)$\,.
 Then  \eqref{liminfformuladiscr} follows by \eqref{energiariscritta}, Theorem \ref{teo:Gamma-conv}(i) applied with $\Omega=A$\,, and by the arbitrariness of $A$\,. 

\vskip5pt
Finally, we prove (iii). By standard density arguments in $\Gamma$-convergence we can assume that $\ffi^\C(R^{\mathrm{T}}\xi^k)=\psi^\C(R^{\mathrm{T}}\xi^k)$ for every $k=1,\ldots,K$ and that $\betalin\in W^{1,\infty}(\Omega;\R^{2\times 2})$\,. Let furthermore $u^{\betalin}\in W^{2,\infty}(\Omega;\R^2)$ be such that $\betalin=\nabla u^{\betalin}$\,.

For every $k=1,\ldots,K$ let 
$$
x^{k,\ep}\in\mathrm{argmin}\{|x^k-x_{T_\ep}|\,:\, T_\ep\in\T_\ep(\Omega)\}
$$
and set $\mu_\ep:=\ep\sum_{k=1}^{K}\xi^k\delta_{x^{k,\ep}}$\,. Then, for $\ep$ small enough, $\mu_\ep\in\mdi$\,.
Moreover, we set $R_\ep\equiv R$\,.
In order to construct $\beta_\ep$\,, we proceed as in the proof of Theorem \ref{teo:Gamma-conv}(ii). We first construct $u_\ep$ as in \eqref{recose}, replacing $x^k$ with $x^{k,\ep}$ and suitably modifying the radii of the annuli. 

More precisely, for every $k=1,\ldots,K$ we define the map $u_\ep^{R,k}\in C^{\infty}(\R^2\setminus\cut^{k,\ep};\R^2)$ as
\begin{equation*}
u_\ep^{R,k}(x):=u^{R,R^{\mathrm{T}}\xi^k}(x-x^{k,\ep})+\sqrt{|\log\ep|}u^{\betalin}(x^{k,\ep})\,,
\end{equation*}
with $\cut^{k,\ep}:=x^{k,\ep}+\cut$ (with $\cut:=\{(x_1;0)\,:\,x_1\ge 0\}$)
and $u^{R,\zeta}$ satisfying \eqref{23marzo}.
Therefore, $\nabla u_\ep^{R,k}(\cdot)=R\beta^{R^{\mathrm{T}}\xi^{k},\C}_{\R^2}(\cdot-x^{k,\ep})$ and $[u^{R,k}_\ep]=\xi^k$ on $\cut^{k,\ep}$ (locally in the sense of traces).

We define the map $u_\ep^{\far}:\Omega_{\ep}(\mu_\ep)\to\R^{2}$ as
\begin{equation*}
 u_\ep^{\far}(x):=\sum_{k=1}^Ku^{R,R^{\mathrm{T}}\xi^k}(x-x^{k,\ep})+\sqrt{|\log\ep|}u^{\betalin}(x)\,.
\end{equation*}
Let $\phi\in C^1([0,3];[0,1])$ be such that $\phi\equiv1$ in $[0,1]$ and $\phi\equiv 0$ in $[2,3]$. We define the function $u_\ep:\Omega_{\frac{\ep}{2}}(\mu_\ep)\to\R^2$ as 
\begin{equation*}
u_\ep(x):=\left\{
\begin{array}{ll}
u_\ep^{R,k}(x)&\textrm{if }x\in A_{\frac{\ep}{2},\ep^{\gamma}+\ep}(x^{k,\ep})\textrm{ for some }k\,,\\
\phi\big(\frac{|x-x^{k,\ep}|}{\ep^{\gamma}+\ep}\big)u_\ep^{R,k}(x)+\big(1-\phi\big(\frac{|x-x^{k,\ep}|}{\ep^{\gamma}+\ep}\big)\big)u_\ep^{\far}(x)&\textrm{if }x\in A_{\ep^{\gamma}+\ep,2(\ep^{\gamma}+\ep)}(x^{k,\ep})\textrm{ for some }k\,,\\
u_\ep^{\far}(x)&\textrm{if }x\in\Omega_{2(\ep^{\gamma}+\ep)}(\mu)\,.
\end{array}
\right.
\end{equation*}
Furthermore, for every $k=1,\ldots,K$ we define the slip variable $\sigma^{k,\ep}:\Omega_\ep^1\to \ep\Tl$ as
\begin{equation*}
\sigma^{k,\ep}(i,j):=\left\{
\begin{array}{ll}
- \xi^k&\textrm{if }\cut^{k,\ep}\cap [i,j]\neq\emptyset \textrm{ and }\langle i-x^{k,\ep},e_2\rangle<0\\
+\xi^k&\textrm{if }\cut^{k,\ep}\cap [i,j]\neq\emptyset\textrm{ and }\langle i-x^{k,\ep},e_2\rangle>0\\
0&\textrm{elsewhere},
\end{array}
\right.
\end{equation*}
and we set $\sigma_\ep:=\sum_{k=1}^{K}\sigma^{k,\ep}$\,.
Finally, we define the map $\beta_\ep\in\sdi(\Omega)$ as
\begin{equation*}
\beta_\ep(i,j):=R(j-i)+\ep(u_\ep(j)-u_\ep(i)-\sigma_{\ep}(i,j))\,.
 \end{equation*}
 We claim that $\{(\mu_\ep;\beta_\ep;R_\ep)\}_\ep$ is a recovery sequence. By construction, $\mu_\ep=\mu[\beta_\ep]$ and,
 by Remark \ref{zeromean} (in particular, by \eqref{precirem}), we have also that $\beta_\ep\in\asdi(\mu_\ep)$ for every $\ep>0$\,.
 Moreover, properties \eqref{zeroprop} and \eqref{dueprop} are trivially satisfied. 
 We show that this is the case also for \eqref{unoprop}.
 By the very definition of $\beta^{\zeta,\C}_{\R^2}$ in \eqref{betapiano}, there exists a universal constant $C>0$ such that for every $\zeta\in\Tl$
 \begin{equation}\label{stimasf}
 |\beta^{\zeta,\C}_{\R^2}(\rho,\theta)|\le \frac{C}{\rho}|\zeta|^2\,,\qquad  |\nabla\beta^{\zeta,\C}_{\R^2}(\rho,\theta)|\le \frac{C}{\rho^2}|\zeta|^2\,.
 \end{equation}
 Therefore, recalling that $\betalin\in W^{1,\infty}(\Omega;\R^{2\times 2})$\,, for every $T_\ep\in\T_\ep(\Omega)$ with $T_\ep\subset \bigcup_{k=1}^KA_{\frac\ep 2,\ep^{\gamma}+\ep}(x^{k,\ep})\cup\Omega_{2(\ep^\gamma+\ep)}(\mu_\ep)$ and for every $x\in T_\ep$ it holds
 \begin{equation}\label{claimfond}
 \begin{aligned}
 \Big|\frac{\tilde\beta_{\ep}^{T_\ep}-R_\ep}{\ep}-\nabla u_\ep(x)\Big|^2\le &\,C\ep^2\|\nabla^2 u_\ep\|^2_{L^\infty(T_\ep;\R^{2\times 2})}\\
 \le&\, C\ep^2\Big( \sum_{k=1}^{K}\|\nabla\beta^{R^{\mathrm{T}}\xi^k,\C}_{\R^2}\|^2_{L^\infty(T_\ep;\R^{2\times 2})}+|\log\ep|\|\nabla\betalin\|^2_{L^\infty(T_\ep;\R^{2\times 2})}\Big)
\\
\le&\, C\ep^2\Big( \sum_{k=1}^{K}\frac{1}{|x-x^{k,\ep}|^4}+|\log\ep|\Big)\,,
 \end{aligned}
 \end{equation}
 for some constant $C>0$ independent of $\ep$\,.
 Furthermore, by arguing verbatim as in the proof of \eqref{peranello0}, we have that, for every $k=1,\ldots,K$\,,
 \begin{equation}\label{panello}
\|\nabla u_\ep\|_{L^\infty\big(A_{\frac{\ep^\gamma}{2},3\ep^\gamma}(x^{k,\ep});\R^{2\times 2}\big)}+\Big\|\frac{\tilde\beta^{\mathcal{T}_\ep}_\ep-R_\ep}{\ep}\Big\|_{L^\infty\big(A_{\frac{\ep^\gamma}{2},3\ep^\gamma}(x^{k,\ep});\R^{2\times 2}\big)}\le \frac{C}{\ep^\gamma}\,.
 \end{equation}
Therefore, by \eqref{claimfond}-\eqref{panello} and the fact that $|\Omega\setminus\insieme|\le C\ep$\,, we readily see that 
\begin{equation}\label{stessanorma}
\frac{1}{|\log\ep|} \Big\|\frac{\tilde\beta^{\mathcal{T}_\ep}_{\ep}-R_\ep}{\ep}-\nabla u_\ep\chi_{\Omega_\ep(\mu_\ep)}\Big\|_{L^2(\Omega;\R^{2\times 2})}^2\to 0\qquad\textrm{as }\ep\to 0.
\end{equation}
 Moreover, by arguing verbatim as in \eqref{peranello00}, \eqref{peranello} and \eqref{finale}, we have
 \begin{equation}\label{quasiladue}
 \frac{\nabla u_\ep\chi_{\Omega_\ep(\mu_\ep)}}{\sqrt{|\log\ep|}}\weakly \betalin\qquad\textrm{in }L^2(\Omega;\R^{2\times 2})\,.
 \end{equation}
 Therefore, by \eqref{stessanorma} and \eqref{quasiladue}, we deduce that $(\mu_\ep;\tilde\beta_\ep^{\mathcal{T}_\ep};R_\ep)\to (\mu;\betalin;R)$\,.
 
Next we show that also \eqref{limsupformuladiscr} holds true. 
 By construction 
  we have
\begin{equation}\label{stimacore}
\tilde E_\ep(\beta_\ep;T_\ep)\le C\ep^2\qquad\textrm{for every }T_\ep\in\T_\ep(\Omega)\textrm{ with }\min_{k=1,\ldots,K}\mathrm{dist}(x^{k,\ep},T_\ep)\le \ep\,,
\end{equation}
and that for every $(i,j)\in\Omega_\ep^1$ with $i,j\in\partial\insieme$
\begin{equation}\label{stimabordo}
\ep^2\psi\Big(\frac{|\beta_\ep(i,j)|^2}{\ep^2}\Big)\le C\ep^2\,.
\end{equation}
Moreover, by \eqref{panello} and \eqref{ok}, for every $k=1,\ldots,K$\,, we have
\begin{equation}\label{panello1}
\frac{1}{\ep^2|\log\ep|}\int_{A_{\frac{\ep^\gamma}{2},3\ep^\gamma}(x^{k,\ep})}W(\tilde\beta_\ep^{\mathcal{T}_\ep})\,\ud x\le\frac{1}{|\log\ep|} \Big\|\frac{\tilde\beta_\ep^{\mathcal{T}_\ep}-R_\ep}{\ep}\Big\|^2_{L^2\big(A_{\frac{\ep^\gamma}{2},3\ep^\gamma}(x^{k,\ep});\R^{2\times 2}\big)}\le \frac{C}{|\log\ep|}\,.
\end{equation}
Let  $\alpha\in(\gamma,1)$.
By arguing as in the proof of \eqref{dentrissimo} and \eqref{dentrodentro}, one can show that
\begin{equation}\label{dentrodentrod}
\limsup_{\ep\to 0}\frac{1}{\ep^2|\log\ep|}\int_{A_{{\ep},\ep^\alpha}(x^{k,\ep})}W(\tilde\beta^{\mathcal{T}_\ep}_\ep)\,\ud x
=C(1-\alpha)\,,
\end{equation}
for each $k=1,\ldots,K$.
Moreover, since $\|\tilde\beta_\ep^{\mathcal{T}_\ep}-R_\ep\|_{L^\infty}\le C\ep^{1-\alpha}$ in $A_{\ep^\alpha,\ep^{\gamma}}(x^{k,\ep})$\,, by \eqref{iiprop}, 
writing the Taylor expansion of $W$ around the identity, 
we find
\begin{equation}\label{oggi}
\begin{aligned}
&\limsup_{\ep\to 0}\frac{1}{\ep^2|\log\ep|}\int_{A_{\ep^\alpha,\ep^{\gamma}}(x^{k,\ep})}W(\tilde\beta^{\mathcal{T}_\ep}_\ep) \,\ud x \\
\le&\,
\limsup_{\ep\to 0}\frac{1}{|\log\ep|}\int_{A_{\ep^\alpha,\ep^{\gamma}}(x^{k,\ep})}\frac 1 2\C \Big(R^{\mathrm{T}}\frac{\tilde\beta^{\mathcal{T}_\ep}_\ep-R}{\ep}\Big): \Big(R^{\mathrm{T}}\frac{\tilde\beta^{\mathcal{T}_\ep}_\ep-R}{\ep}\Big)
\,\ud x\\
&\,+\limsup_{\ep\to 0}\frac{1}{|\log\ep|}\int_{A_{\ep^\alpha,\ep^{\gamma}}(x^{k,\ep})}\frac{\sigma(\tilde\beta_\ep^{\mathcal{T}_\ep}-R)}{|\tilde\beta_\ep^{\mathcal{T}_\ep}-R|^2}\Big|\frac{\tilde\beta_\ep^{\mathcal{T}_\ep}-R}{\ep}\Big|^2\,\ud x\,,
\end{aligned}
\end{equation}
with $\lim_{|M|\to 0}\frac{\sigma(M)}{|M|^2}=0$\,. By \eqref{claimfond} and \eqref{stimasf}, we have that
$\Big|\frac{\tilde\beta_\ep^{\mathcal{T}_\ep}-R}{\ep}\Big|\le \frac{C}{|x-x^{k,\ep}|}$ for every $x\in A_{\ep^\alpha,\ep^{\gamma}}(x^{k,\ep})$\,, whence we deduce that the last $\limsup$ in \eqref{oggi} is equal to zero. Therefore, by \eqref{oggi}, using again \eqref{claimfond}, we get
\begin{equation}\label{oggi2}
\begin{aligned}
&\limsup\frac{1}{\ep^2|\log\ep|}\int_{A_{\ep^\alpha,\ep^{\gamma}}(x^{k,\ep})}W(\tilde\beta^{\mathcal{T}_\ep}_\ep) \,\ud x\\
\le&\,\limsup_{\ep\to 0}\frac{1}{|\log\ep|}\int_{A_{\ep^\alpha,\ep^{\gamma}}(x^{k,\ep})}\frac 1 2\C \big(R^{\mathrm{T}}\nabla u_\ep\big):\big(R^{\mathrm{T}}\nabla u_\ep\big) +C\ep^2\Big(\frac{1}{|x-x^{k,\ep}|^4}+|\log\ep|\Big)\,\ud x\\
=&\, \limsup_{\ep\to 0}\frac{1}{|\log\ep|}\int_{A_{\ep^\alpha,\ep^{\gamma}}(0)}\frac 1 2\C\beta^{R^{\mathrm{T}}\xi^k,\C}_{\R^2}:\beta^{R^{\mathrm{T}}\xi^k,\C}_{\R^2}\,\ud x\\
=&\,(\alpha-\gamma)\psi^{\C}(R^{\mathrm{T}}\xi^k)\,,
\end{aligned}
\end{equation}
where the last equality follows by \eqref{servealimsup}. 
Furthermore, similar arguments yield
\begin{equation}\label{fuorid}
\limsup_{\ep\to 0}\frac{1}{\ep^2|\log\ep|}\int_{\Omega_{2\ep^\gamma}(\mu_\ep)}W(\tilde\beta_\ep^{\mathcal{T}_\ep})\,\ud x\\
\le\gamma\sum_{k=1}^K\psi^{\C}(R^{\mathrm{T}}\xi^k)+\frac 1 2\int_{\Omega}\C R^{\mathrm{T}}\betalin:R^{\mathrm{T}}\betalin\,\ud x\,.
\end{equation}
Finally, combining \eqref{energiariscritta0}, \eqref{stimacore}-\eqref{dentrodentrod}, \eqref{oggi2}-\eqref{fuorid}, and sending $\alpha\to 1$\,, we obtain \eqref{limsupformuladiscr}.
\end{proof}

\end{document}